\documentclass[a4paper,10pt]{amsart}

\usepackage{amssymb}
\usepackage{latexsym}
\usepackage{amsfonts}
\usepackage{amsmath}
\usepackage{amssymb}
\usepackage{mathtools}
\usepackage{amsxtra}
\usepackage{amscd}
\usepackage[english]{babel}
\usepackage{graphicx}
\usepackage{amscd}
\usepackage{color}
\usepackage{subfigure}


\DeclareMathOperator{\idx}{\mathrm{i}}

\newcommand{\R}{\mathbb{R}} 
\newcommand{\N}{\mathbb{N}}
\newcommand{\Z}{\mathbb{Z}}

\newcommand{\fix}{\mathop\mathrm{Fix}\nolimits}

\newcommand{\sign}{\mathop\mathrm{sign}\nolimits}
\newcommand{\cl}[2][]{\overline{#2}^{_{#1}}}
\newcommand{\Fr}[2][]{\mathrm{Fr}_{_{#1}}{(#2)}}

\newcommand{\dom}{\mathcal{D}} 
\newcommand{\Orb}{\mathcal{O}}
\newcommand{\D}{\partial}
\newcommand{\X}{\times} 

\newcommand{\Ker}{\mathop\mathrm{ker}\nolimits}

\newcommand{\F}{\mathcal{F}}
\newcommand{\xx}{\boldsymbol{\mathcal{X}}}
\newcommand{\Sc}{\boldsymbol{\mathcal{S}}}


\newcommand{\dif}{\mathrm{\,d}}
\DeclareMathOperator{\Pr1}{\pi_1}

\newtheorem{definition}{Definition}[section]
\newtheorem{theorem}{Theorem}[section]
\newtheorem{corollary}[theorem]{Corollary}
\newtheorem{lemma}[theorem]{Lemma}
\newtheorem{example}[theorem]{Example}
\newtheorem{proposition}[theorem]{Proposition}
\newtheorem{remark}[theorem]{Remark}

%
%

 \usepackage[top=2cm, bottom=1.5cm, outer=3.5cm, inner=3.5cm, heightrounded,%
               marginparwidth=2.6cm, marginparsep=0.5cm]{geometry}


\title[Non-$T$-resonance and multiplicity results for ODEs\ldots ]%
{About the notion of non-$T$-resonance and applications to topological multiplicity 
results for ODEs on differentiable manifolds}

\author{Luca Bisconti and Marco Spadini}
\address[L.\ Bisconti and M.\ Spadini]{Dipartimento di Matematica e
  Informatica, Universit\`a di Firenze, Via S.\ Marta 3, 50139
  Firenze, Italy} 
\begin{document}

\begin{abstract} By using topological methods,  mainly the degree of a tangent vector field, 
we establish multiplicity results for $T$-periodic solutions of parametrized $T$-periodic 
perturbations of autonomous ODEs on a differentiable manifold $M$. In order to provide insights
into the key notion of $T$-resonance, we consider the elementary situations $M=\R$ and $M=\R^2$.
So doing, we provide more comprehensive analysis of those cases and find improved conditions.
\end{abstract}
\maketitle

\noindent
\emph{\footnotesize 2000 Mathematics Subject Classification:}
{\scriptsize 34A09; 34C25; 34C40}.\\
\emph{\footnotesize Key words:} {Periodic solutions, ordinary differential equations on
  manifolds, multiplicity}.

\section{Introduction}
The search of multiplicity results for periodic solutions is a growing area of research. 
In this paper, we consider parametrized periodic perturbations of autonomous ODEs on
differentiable manifolds and concentrate upon those results which are attainable through an 
intrinsically topological route. In doing this, we follow mainly \cite{FS98} and \cite{FPS00} 
basing our argument on the interplay between global and local results (Theorem \ref{genmu}). A 
key notion for the ``local'' part of this approach is that of \emph{ejecting set} or \emph{point}
(see Section \ref{secMul} for a definition). 
Although it is sometimes possible to prove directly that the property of being ejecting holds 
for some points, usually the most practical way is through a non-$T$-resonance condition (see, 
e.g., \cite[Ch.\ 7]{Cr94}, a similar idea can be traced back to Poincar\'e, see \cite{Mawpend} 
for an exposition). 

We point out that many other approaches to multiplicity results for ordinary differential 
equations (especially of second order) have been pursued successfully. An exhaustive list of 
those is impossible to give here; we only mention \cite{Fo12, Ma90, Pa08, Re96, Ur05} and 
references therein which we consider representative of different aspects and techniques.

The main purpose of this paper is to explore the concept of (non-) 
$T$-resonant zero of a vector field, providing an intuitive geometric grasp for it. We 
attempt to achieve this goal by considering the simplest possible situations, namely $M=\R$ 
and $M=\R^2$, for very smooth vector fields so that the implicit function theorem helps us
make sense of the local shape of the set of $T$-periodic solutions.  Indeed, in our 
investigation we come across a rather sharp description of the local structure of this set 
for scalar differential equations providing multiplicity/non-existence results (Proposition 
\ref{proNTomu}). In the two-dimensional case, we find an improved condition for an isolated 
zero of a vector field on $\R^2$ to be ejecting (Theorem \ref{condisofin}). 

Throughout this paper, we deal with the following parametrized differential equation 
\begin{equation}
\dot x=g(x)+\lambda f(t,x),\qquad\lambda \in [0,\infty ),  \label{eq0}
\end{equation}
where, throughout this section, $g\colon M\to\R^k$ and $f\colon\R\X M\to\R^k$ are continuous 
tangent vector fields on a boundaryless manifold $M\subseteq\R^k$ and $f$ is $T$-periodic in the 
first variable. Our above mentioned multiplicity result (Theorem \ref{genmu}) gives a lower bound 
on the number of $T$-periodic solutions of \eqref{eq0} for any fixed $f$ when $\lambda\geq 0$ is 
sufficiently small. The bound is based only on the properties of $g$,  Roughly speaking, we prove 
that if $M\subseteq\R^k$ is closed and $g$ has $n-1$ non-$T$-resonant zeros with local index not 
adding up to the degree of $g$, then, fixed the forcing field $f$, there exists $\lambda_*>0$ such 
that \eqref{eq0} has at least $n$ geometrically distinct $T$-periodic solutions. Recall that two 
$T$-periodic solutions $x$ and $y$ of \eqref{eq0} are said to be \emph{geometrically distinct} if 
their images do not coincide; in fact, $x$ and $y$ are not geometrically distinct if and only if 
there exists $\tau\in(0,T)$ such that $x(t+\tau)=y(t)$.

Our approach is theoretically-oriented but the kind of equations treated is common to many physical
and engineering models. For instance, under appropriate conditions, differential-algebraic equations, 
which are a very well-known modeling and simulation tool for constrained systems, can be reduced to
the form \eqref{eq0} see, e.g.\ \cite{KM}. In fact, the technique used in the proof of our main
multiplicity result, Theorem \ref{genmu}, is shared by a number different contexts. For example 
differential-algebraic equations have been considered in \cite{Bis2012,spaDAE} and constrained 
mechanical systems in \cite{FS98}.

We conclude this introduction noting that the pictures present in the paper have been produced by 
the popular interactive plotting package Gnuplot \cite{gnuplot} using data generated by simple 
software written by one of the authors (M.\ Spadini) for the purpose of analyzing the set of 
starting points for scalar and two-dimensional parametrized equations.\footnote{The version of this
software, adapted for scalar equations, is available through its author web page
\texttt{http://www.dma.unifi.it/$\sim$spadini/Software/01\_Utilities/StartingPoints-1D}, 
whereas the one for two-dimensional equations is still under development.}

\section{Notions and preliminary results}\label{secNPR}

Theorem \ref{tuno} below is the main basis of the results discussed here. It is, deeply grounded 
on topological methods, mainly fixed point index theory and the degree of a tangent vector field. 
However, here, the reader does not need to know the details of the former theory which are 
completely hidden in the proof. The same is not true for the latter, though. Good references for 
it are \cite{GP,MilTop}. Here, we confine ourselves to reminding that, given a tangent vector 
field $v\colon M\to\R^k$,  here $M\subseteq\R^k$ denote a boundaryless differentiable manifold, 
and an open subset $W\subseteq M$ we say that $v$ is admissible on $W$ if $v^{-1}(0)\cap W$ is 
compact. In this case it is well-defined an integer $\deg(v,W)$, called the degree or characteristic 
of $v$ in $W$, which roughly speaking counts algebraically the number of zeros of $v$ that lie in 
$W$.%
\footnote{In the sense that when the zeros of $v$ are all non-degenerate, then the set
$v^{-1}(0)\cap W$ is finite and
\begin{equation*} 
\deg(v, W) = \sum_{{p}\in v^{-1}(0)\cap W} \sign\, \det v'(p).
\end{equation*}
It is in order to point out here that this notion of degree should not be confused with the 
(Brouwer) degree of maps between oriented manifolds, with which it morally coincides in the 
special case when $M=\R^k$. Indeed the degree of a tangent vector field satisfies all the 
classical properties of the Brouwer degree: \emph{Solution, Excision, Additivity, Homotopy 
Invariance, Normalization} etc. (see also \cite{Llo}).
}

The Excision property allows the introduction of the notion of index of an isolated zero. 
This concept will play a crucial role throughout this paper.
Given an isolated zero $p_0$ of a vector field $v$ we put $\idx(v,p_0):=\deg(v,U)$
where $U$ is an isolating neighborhood of $p_0$. Actually, if $v$ is $C^1$ at $p_0$, then
$v'(p_0)$ is an endomorphism of $T_{p_0}M$ (see e.g.\ \cite{MilTop}) and, if $v'(p_0)$
is also invertible, then $\idx(v,p_0)=\sign\det v'(p_0)$. The additivity property of
the degree implies that if $v$ is admissible in $W$ and $p_1,\ldots,p_n\in W$ are
isolated zeros of $v$, then
\[
 \deg(v,W)=\sum_{j=1}^n\idx(v,p_j) + \deg\big(v,W\setminus\{p_1,\ldots,p_n\}\big).
\]

\smallskip
Let us now recall some notions relative to the set of $T$-periodic solutions of \eqref{eq0}.

Many considerations throughout the paper depend on the following version of Ascoli's theorem.
\begin{theorem}\sloppy
Let $Y$ be a subset of $\R^k$ and $B$ a bounded equicontinuous subset of $C\big([a,b],Y\big)$. 
Then $B$ is totally bounded in $C\big([a,b],Y\big)$. In particular, if $Y$ is closed, then 
$B$ is relatively compact.
\end{theorem}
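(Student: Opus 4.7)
The statement is a mild refinement of the classical Ascoli–Arzel\`a theorem where the codomain is not all of $\R^k$ but a subset $Y$. My plan is to reduce it to the standard version and then carefully transfer total boundedness and compactness back to $C([a,b],Y)$.

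First, I view $B$ as a subset of the larger space $C([a,b],\R^k)$, which carries the usual sup-norm metric. Since $B$ is bounded and equicontinuous by hypothesis, the classical Ascoli–Arzel\`a theorem for continuous $\R^k$-valued functions on a compact interval guarantees that $B$ is totally bounded in $C([a,b],\R^k)$. This is the main substantive input; I would quote it rather than reprove it.

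Next, I have to check that total boundedness descends to $C([a,b],Y)$. The sup-metric on $C([a,b],Y)$ is the restriction of the sup-metric on $C([a,b],\R^k)$, so $B$ sits isometrically inside the bigger space. Given $\varepsilon>0$, cover $B$ by finitely many balls of radius $\varepsilon/2$ in $C([a,b],\R^k)$; discarding those that do not meet $B$ and choosing, for each surviving ball, a center $u_i\in B\subseteq C([a,b],Y)$, the enlarged balls of radius $\varepsilon$ around the $u_i$ cover $B$ inside $C([a,b],Y)$. This yields the first assertion.

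For the second assertion, assume $Y$ is closed in $\R^k$. Then $C([a,b],Y)$ is a closed subspace of the complete metric space $C([a,b],\R^k)$ (uniform limits of $Y$-valued continuous functions are $Y$-valued by closedness of $Y$), hence itself complete. In a complete metric space, a subset is relatively compact if and only if it is totally bounded; applying this to $B$ inside $C([a,b],Y)$ gives the conclusion. The only point requiring a bit of care is to be sure that the closure is taken inside $C([a,b],Y)$ rather than $C([a,b],\R^k)$, but these coincide precisely because $C([a,b],Y)$ is closed in $C([a,b],\R^k)$. I do not anticipate a real obstacle here; the argument is essentially bookkeeping once the classical Ascoli theorem is invoked.
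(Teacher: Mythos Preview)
Your argument is correct. The paper does not actually supply a proof of this statement; it merely records it as a version of Ascoli's theorem and uses it as a black box, so there is no ``paper's own proof'' to compare against. Your reduction to the classical Ascoli--Arzel\`a theorem in $C([a,b],\R^k)$, followed by the observation that total boundedness is inherited by an isometrically embedded subspace (re-centering the $\varepsilon$-net inside $B$) and that $C([a,b],Y)$ is complete when $Y$ is closed, is exactly the standard justification and is sound.
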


We say that $(\lambda ,x)\in[0,\infty)\X C_T(M)$ is a \emph{$T$-pair} if $x(\cdot)$ 
is a solution of \eqref{eq0} corresponding to $\lambda$. If $\lambda=0$ and $x$ is
constant, then $(\lambda ,x)$ is said to be \emph{trivial}. One may have nontrivial 
solutions even when $\lambda=0$, as it happens, for instance, when $T=2\pi$, $M=\R^2$ 
and $g(x_1,x_2)=(-x_2,x_1)$, $x=(x_1,x_2)$; clearly  any $2\pi$-periodic $f$ can be 
taken for this example.

\smallskip
Denote by $\xx$ the subset of $[0,\infty)\X C_T(M)$ of all the $T$-pairs. Well-known 
properties of differential equations imply that $\xx$ is closed. Hence, as a closed 
subset of a locally complete space, it is locally complete as well. The following 
technical remark from \cite{FS98} is noteworthy:

\begin{remark}\label{compl}
The space $\xx$ of all the $T$-pairs is locally totally bounded. Thus, being locally 
complete, $\xx$ is locally compact. Moreover, if we assume that $M$ is a complete 
manifold, then any bounded subset of $\xx$ is actually totally bounded. As a 
consequence, when the manifold $M\subseteq\R^k$ is a closed subset, closed and bounded 
sets of $T$-pairs are compact. 
\end{remark}

As in \cite{FP95} we stipulate some conventions for the sake of simplicity. Accordingly, 
we regard every space as its image in the following diagram of closed embeddings: 
\begin{equation}\label{diagram}
\begin{CD}
\left[ 0,\infty \right) \times M @>>> \left[ 0,\infty \right) \times
C_T(M) \\  @AAA  @AAA  \\
M @>>>  C_T(M).
\end{CD}
\end{equation}
In particular, $M$ will be identified with its image in $C_T(M)$ under the
embedding which associates to any $p\in M$ the map $\hat p\in C_T(M)$
constantly equal to $p$. And also, $M$ will be regarded as the slice 
$\{0\}\X M\subseteq[0,\infty)\X M$. Similarly, $C_T(M)$ will be regarded as 
$\{0\}\X C_T(M)$. 

According to these identifications, given an open subset $\Omega$ of $[0,\infty)\X C_T(M)$, 
by $\Omega \cap M$ we mean the open subset of $M$ given by all $p\in M$ such that the pair 
$(0,\hat p)$ belongs to $\Omega $. If $U$ is an open subset of $[0,\infty)\X M$, then 
$U\cap M$ represents the open set $\{p\in M:(0,p)\in U\}$. We point out that with the above 
conventions, $g^{-1}(0)$ can be viewed as the set of trivial $T$-pairs.\medskip\ 

We will heavily use the following result about the structure of the set $\xx$ of $T$-pairs of 
\eqref{eq0} that was proved in \cite{FS97}.

\begin{theorem}\label{tuno}
Let $f\colon\R\X M\to\R^k$ and $g\colon M\to\R^k$ be continuous tangent vector fields 
defined on a (boundaryless) differentiable manifold $M\subseteq\R^k$, $f$ being 
$T$-periodic in the first variable. Let $\Omega$ be an open subset of 
$[0,\infty)\X C_T(M)$, and assume that $\deg(g,\Omega \cap M)$ is well defined and 
nonzero. Then there exists a connected set $\Gamma$ of nontrivial $T$-pairs of 
\eqref{eq0} in $\Omega$ whose closure in $[0,\infty)\X C_T(M)$ intersects 
$g^{-1}(0)\cap\Omega$ and is not contained in any compact subset of $\Omega$. In 
particular, if $M$ is closed in $\R^k$ and $\Omega =[0,\infty)\X C_T(M)$, then 
$\Gamma$ is unbounded.
\end{theorem}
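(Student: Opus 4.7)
The plan is a classical global continuation argument: recast the search for $T$-pairs as a fixed point problem for a compact operator on a suitable subset of $[0,\infty)\X C_T(M)$, identify the degree $\deg(g,\Omega\cap M)$ with the fixed point index of this operator localized near the ``trivial slice'' $\{0\}\X(g^{-1}(0)\cap\Omega)$, and then extract the connected branch $\Gamma$ via a Whyburn-type connectedness principle.

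Concretely, I would use a tubular neighborhood of $M\subseteq\R^k$ to define a (partial) integral operator $\Psi(\lambda,x)(t)=x(0)+\int_0^t\bigl[g(x(s))+\lambda f(s,x(s))\bigr]\dif s$, projected onto $M$, whose fixed points in the $x$-variable are precisely the $T$-pairs of \eqref{eq0}. Tangency of $g$ and $f$ to $M$ keeps images in $C_T(M)$, and an Ascoli argument together with Remark \ref{compl} gives that $\Psi$ is compact on relatively compact subsets of its domain, so fixed point index theory applies. The index computation, which is the technical heart of the argument, proceeds by homotoping the time-$T$ map $p\mapsto\Phi_0(T,p)$ of $\dot x=g(x)$ to the ``Euler step'' $p\mapsto p+Tg(p)$ read in the tubular neighborhood; a classical identification between the fixed point index of such displacement maps and the degree of the underlying tangent vector field (see e.g.\ \cite{FPS00}) gives that the fixed point index of $\Psi$ on $\{0\}\X(\Omega\cap M)$, viewed through diagram \eqref{diagram}, equals $\deg(g,\Omega\cap M)$ up to a fixed sign. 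By assumption this quantity is nonzero.

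Once the index is nonzero, the branch $\Gamma$ is produced by an abstract connectedness lemma: in a locally compact metric space (here $\xx\cap\Omega$, thanks to Remark \ref{compl}), if the fixed point index at the relatively clopen ``trivial'' part is nonzero, then removing that part leaves a connected component whose closure in $\Omega$ meets the trivial part and is not contained in any compact subset of $\Omega$. The main obstacle I anticipate is purely technical: one must make the operator $\Psi$ honestly compact and its fixed point set honestly equal to $\xx\cap\Omega$ while working only inside $\Omega$, and all compactness has to be extracted \emph{locally} from Remark \ref{compl} rather than from any global boundedness hypothesis on $M$. Under the extra assumption that $M$ is closed in $\R^k$ and $\Omega=[0,\infty)\X C_T(M)$, Remark \ref{compl} upgrades ``closed-bounded'' to ``compact'' in $\xx$, so a $\Gamma$ that is not contained in any compact subset of $\Omega$ cannot be bounded, yielding the final assertion.
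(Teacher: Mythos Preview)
The paper does not actually prove Theorem \ref{tuno}: it is quoted verbatim as a result ``that was proved in \cite{FS97}'' and is used as a black box throughout. So there is no in-paper proof to compare against; what can be compared is your outline versus the known argument in \cite{FS97} (and the closely related \cite{FP95,FPS00}).

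Your strategy is essentially that argument: reduce $T$-pairs to fixed points of a compact operator built from the flow (via a tubular neighborhood of $M$), identify the fixed point index at $\lambda=0$ with $\deg(g,\Omega\cap M)$ through a homotopy between the Poincar\'e translation and a ``small-time'' displacement map, and then apply a Whyburn-type separation lemma in the locally compact space $\xx\cap\Omega$ to produce the connected branch $\Gamma$. The last sentence about unboundedness is exactly the use of Remark \ref{compl} that you indicate. One point that needs tightening in your sketch: the operator $\Psi(\lambda,x)(t)=x(0)+\int_0^t[\,g(x(s))+\lambda f(s,x(s))\,]\dif s$ as written does not land in $C_T(M)$ (neither the periodicity nor the tangency is automatic), and ``projecting onto $M$'' alone does not fix the periodicity; in \cite{FS97} this is handled either by working with the Poincar\'e operator $P_T^\lambda$ on $M$ directly (after a smoothing approximation of $f,g$ to ensure uniqueness) or by a carefully designed translation operator whose fixed points are exactly the $T$-periodic solutions. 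This is precisely the ``main obstacle'' you flag, and it is where the real work lies; the rest of your outline is correct and matches the cited proof.
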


\begin{definition}
Let $f$ and $g$ be smooth and let $\Sc\subseteq [0,\infty)\X M$ be the topological subspace 
consisting of all pairs $(\lambda ,p)$ such that the maximal solution of the following 
Cauchy problem: 
\begin{equation}\label{cpuno}
\left\{ 
\begin{array}{l}
\dot x=g(x)+\lambda f(t,x), \\ 
x(0)=p,
\end{array}
\right. 
\end{equation}
is $T$-periodic. The elements of $\Sc$ are called \emph{starting points}. If $(\lambda,p)\in\Sc$ 
the point $p$ will be informally referred to as an \emph{initial point} of a $T$-periodic solution.
\end{definition}

An important fact that follows from the proof of Theorem \ref{tuno} in \cite{FS97} 
is the following:
\begin{remark}\label{StPo}
Let $h\colon\xx\to\Sc$ be the map which assigns to any $T$-pair $(\lambda ,x)$ the starting 
point $\big(\lambda ,x(0)\big)$. Observe that $h$ is continuous, onto and, since $f$
and $g$ are smooth, it is also one to one. Furthermore, by the continuous dependence 
on initial data, we get the continuity of $h^{-1}\colon\Sc\to\xx$. Thus $h$ maps 
$\xx\cap\Omega$ homeomorphically onto $\Sc\cap U$. A starting point $(\lambda ,p_0)$ is called 
\emph{trivial} when $\lambda=0$ and $p_0\in g^{-1}(0)$. Notice that trivial $T$-pairs 
correspond to the trivial starting points under the homeomorphism $h$. 
\end{remark}

Suppose that $f$ and $g$ are such that Cauchy problem \eqref{cpuno} admits unique maximal
solution $x(\lambda,p,\cdot)$ for any $p\in M$ and $\lambda\in[0,\infty)$. Given 
$\lambda\in[0,\infty)$ and $t\in\R$, we set $P_t^\lambda(p):=x(\lambda,p,t)$ whenever
this makes sense. Consider, in particular $t=T$. Well-known properties of differential 
equations imply that the domain $\dom$ of the map $(\lambda,p)\mapsto P_T^\lambda(p)$ is 
an open subset (possibly empty) of $[0,\infty)\X M$. In particular, for any $\lambda\geq 0$,
the slice $\dom_\lambda:=\{p\in M: (\lambda,p)\in\dom\}$ is the (open) domain of the map 
$p\mapsto P_T^\lambda(p)$. Clearly, the set $\Sc$ defined above is contained 
in $\dom$ and we have
\[
\Sc=\bigcup_{\lambda\in[0,\infty)}\fix(P_T^\lambda),
\]
where $\fix(P_T^\lambda)$ denotes the set of fixed points of the map 
$P_T^\lambda(\cdot)\colon\dom_\lambda\to M$.

\smallskip
The importance of the property highlighted in the remark will become apparent in later
sections. 
In fact, in order to illustrate the meaning of the non-$T$-resonance condition (see 
next section) we will consider explicit examples in $\R$ and $\R^2$ and plot 
numerically (a part of) the image under $h$ of the set $\Gamma$ of Theorem \ref{tuno}.
To the same end, in the case $M=\R^k$ we will also apply the implicit function theorem 
to the function $\F\colon\dom\subseteq[0,\infty)\X\R^k\to\R^k$ defined by 
\[
\F(\lambda,p):=P_T^\lambda(p)-p
\]
whose zeros are exactly the starting points of \eqref{eq0}.

\section{Ejecting sets and multiplicity}\label{secMul}
In this section we gather some of the notions which are central to our approach to
multiplicity results. The main references for this section are \cite{FS98,FPS00}.
We start with some purely topological facts and definitions.

Let $Y$ be a metric space and $X$ a subset of $[0,\infty)\X Y$. Given $\lambda\geq 0$, 
let $X_{\lambda}$ be the slice $\big\{ y\in Y:(\lambda ,y)\in X \big\}$. Moreover, 
given a topological space $S$ and two subsets $A$ and $B$, with $A\subseteq B$, 
$\cl[B]{A}$ and $\cl{A}$ will denote the closure of $A$ in $B$ and in $S$, 
respectively. Analogously, by $\Fr[B]{A}$ and by $\Fr{A}$ we refer to the boundary of 
$A$ relative to $B$ and to $S$ respectively. Finally, $\#Z$ denotes the cardinality 
of a set $Z$.

\begin{lemma}[\cite{FPS00}]\label{conn1} 
Let $Y$ be a metric space and let $X$ be a locally compact subset of $[0,\infty )\X Y$. 
Assume $K$ is a compact relatively open subset of the slice $X_0$. Then, for any 
sufficiently small open neighborhood $U$ of $K$ in $Y$, there exists a positive number 
$\delta$ such that
\[
X\cap\big( [0,\delta ]\times\mathrm{Fr}(U)\big) =\emptyset\;.
\]
\end{lemma}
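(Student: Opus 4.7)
The plan is to argue by contradiction, using local compactness of $X$ to trap a hypothetical bad sequence inside a compact set and the relative openness of $K$ in $X_0$ to force its limit to land in $K$ itself.

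First, I would set up a compact trap around $K$. Since $X$ is locally compact, every point of $X$ has an open neighborhood in $[0,\infty)\X Y$ whose closure meets $X$ in a compact set. For each $q=(0,y)\in K$, this neighborhood can be chosen of the product form $[0,a_q)\X B_q$, with $B_q$ an open ball in $Y$ around $y$. By compactness of $K$, finitely many of the $B_q$'s, say $B_{q_1},\ldots,B_{q_m}$, cover $K$; writing $a:=\min_i a_{q_i}>0$ and $W_0:=\bigcup_i B_{q_i}$, the set $N:=X\cap\big([0,a]\X\overline{W_0}\big)$ is contained in the finite union $\bigcup_i X\cap\big([0,a_{q_i}]\X\overline{B_{q_i}}\big)$ of compact sets and is closed in $X$, hence compact.

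Next, I would exploit that $K$ is relatively open in $X_0$: there is an open $V_1\subseteq Y$ with $X_0\cap V_1=K$, and replacing $W_0$ by $W_0\cap V_1$ preserves $K\subseteq W_0$ and compactness of $N$ while securing the crucial extra property $X_0\cap W_0=K$. Since $K$ is compact in the metric space $Y$ and $W_0$ is open with $K\subseteq W_0$, one has $d(K,Y\setminus W_0)>0$; so any open neighborhood $U$ of $K$ in $Y$ with $\overline{U}\subseteq W_0$ qualifies as ``sufficiently small''. For any such $U$, the relation $K\cap\mathrm{Fr}(U)=\emptyset$ is automatic from $K\subseteq U$ with $U$ open.

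Finally, the contradiction argument. If no $\delta>0$ worked for the fixed $U$, there would exist a sequence $(\lambda_n,y_n)\in X$ with $\lambda_n\to 0$ and $y_n\in\mathrm{Fr}(U)$. For $n$ large, $\lambda_n<a$ and $y_n\in\mathrm{Fr}(U)\subseteq\overline{U}\subseteq W_0$, so $(\lambda_n,y_n)\in N$; compactness yields a subsequence converging to some $(0,y^*)\in N\subseteq X$, the first coordinate being forced by $\lambda_n\to 0$. Closedness of $\mathrm{Fr}(U)$ gives $y^*\in\mathrm{Fr}(U)\subseteq W_0$, and $y^*\in X_0$ by construction, so $y^*\in X_0\cap W_0=K$, contradicting $K\cap\mathrm{Fr}(U)=\emptyset$. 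The main obstacle is precisely the coupling in the second step: without arranging $X_0\cap W_0=K$, compactness would only yield a point of $X_0$ on $\mathrm{Fr}(U)$, which is not yet a contradiction. Once that alignment is in place, the remaining steps are bookkeeping combining local compactness, compactness of $K$, and the metric structure of $Y$.
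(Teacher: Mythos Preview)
The paper does not prove this lemma; it merely cites it from \cite{FPS00}. Your argument is correct and is the standard compactness route: construct a compact ``box'' $N=X\cap([0,a]\times\overline{W_0})$ around $K$ via local compactness, align it so that $X_0\cap W_0=K$ using the relative-openness hypothesis, and extract a limit point of a hypothetical bad sequence that is forced into $K\cap\mathrm{Fr}(U)=\emptyset$.

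One small point deserves a word of justification. Your opening claim that each $q\in K$ has a product neighborhood $[0,a_q)\times B_q$ in the ambient space whose \emph{ambient} closure meets $X$ in a compact set is not the literal definition of local compactness of the subspace $X$; it needs the usual shrinking trick. Namely, take a compact neighborhood $C_q$ of $q$ in $X$, choose an ambient ball $B(q,r)$ with $B(q,r)\cap X\subseteq C_q$, and then pass to a product box contained in $B(q,r/2)$, so that its ambient closure lies in $B(q,r)$ and hence its trace on $X$ is a closed subset of $C_q$. With that detail supplied, the remaining steps (including the replacement $W_0\mapsto W_0\cap V_1$, which only shrinks $N$) go through as written.
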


\begin{definition}\label{defej}
Let $X$ be a subset of $[0,\infty )\times Y$. We say that $A\subseteq X_0$ is an 
\emph{ejecting set} (for $X$) if it is relatively open in $X_0$ and there exists a 
connected subset of $X$ which meets $A$ and is not contained in $X_0$. 
\end{definition}

If a singleton $\{p_0\}\subseteq X_0$ is an ejecting set, the point $p_0$ will be
called, with slight abuse of notation, an \emph{ejecting point}. An important 
class of ejecting points will be presented below where the notion of 
non\hbox{-}$T$\hbox{-}resonant point of \eqref{eq0} is introduced. In fact, in the
context of multiplicity results for $T$-periodic solutions, the sets $X$ and $Y$ above 
will be defined as $X=\xx$ and $Y=C_T(M)$.

We will say that a point $p_0\in g^{-1}(0)$ is $T$-resonant for \eqref{eq0} if 
$g$ is $C^1$ in a neighborhood of $p_0$ and if the linearized problem (on $T_{p_0}M$) 
\begin{equation*}
\left\{ 
\begin{array}{l}
\dot x=g^{\prime }(p_0)\,x, \\ 
x(0)=x(T)\;,
\end{array}
\right.
\end{equation*}
which corresponds to $\lambda =0$, admits nontrivial solutions.

Notice that a point $p_0\in g^{-1}(0)$ is not $T$-resonant if $g'(p_0)$ (which maps $T_{p_0}M$ 
into itself, see e.g. \cite{MilTop}) has no eigenvalues of the form $\frac{2n\pi i}T$ with 
$n\in\Z$. Thus, in particular, $p_0$ is an isolated zero of $g$ and $\idx(g,p_0)\neq 0$.

The following lemma from \cite{FS98} will play an important role. For reasons of 
future reference we reproduce its proof with some small simplifications.

\begin{lemma}\label{stretto}
Let $g\colon M\to\R^k$ be a $C^1$ tangent vector field. If $p_0\in g^{-1}(0)$ is not 
$T$-resonant, then for any sufficiently small neighborhood $V$ of the constant 
function $\hat p_0\equiv p_0$ in $C_T(M)$ there exists a real number $\delta_V>0$ such 
that $[0,\delta_V]\X\Fr{V}$ does not contain any $T$-pair of \eqref{eq0}.
\end{lemma}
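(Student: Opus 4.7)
The plan is to apply Lemma \ref{conn1} to $Y=C_T(M)$ and $X=\xx$, with $K=\{\hat p_0\}$. By Remark \ref{compl}, $\xx$ is locally compact, so the hypotheses of that lemma will be satisfied as soon as we check that $\{\hat p_0\}$ is compact (obvious) and relatively open in the slice $\xx_0$, i.e., in the set of $T$-periodic solutions of the autonomous equation $\dot x=g(x)$. Once this isolation is established, Lemma \ref{conn1} directly produces the required $\delta_V>0$ for any sufficiently small open neighborhood $V$ of $\hat p_0$ in $C_T(M)$, and the conclusion $\xx\cap([0,\delta_V]\X\Fr{V})=\emptyset$ is exactly the statement to be proved.

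To establish the isolation, we exploit the non-$T$-resonance together with the $C^1$ regularity of $g$ near $p_0$. The latter grants local uniqueness for the Cauchy problem and makes the local time-$T$ map $P_T^0$ well defined and of class $C^1$ on a neighborhood of $p_0$ in $M$; moreover its derivative at $p_0$, as an endomorphism of $T_{p_0}M$, equals $e^{T g'(p_0)}$. The non-$T$-resonance condition says that $g'(p_0)$ has no eigenvalue of the form $2n\pi i/T$ with $n\in\Z$, so $e^{T g'(p_0)}-\id$ is invertible on $T_{p_0}M$. Consequently $p_0$ is an isolated fixed point of $P_T^0$ in $M$: there exists $r>0$ such that no $p\in M$ with $0<\|p-p_0\|\leq r$ satisfies $P_T^0(p)=p$.

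With this $r>0$, any $T$-periodic solution $x$ of $\dot x=g(x)$ whose $C_T$-distance from $\hat p_0$ is smaller than $r$ satisfies $x(0)\in M$ with $\|x(0)-p_0\|<r$, and $x(0)$ is necessarily a fixed point of $P_T^0$; hence $x(0)=p_0$ and, by the uniqueness of the Cauchy problem near $p_0$, $x\equiv p_0$, that is, $x=\hat p_0$. This shows that a sufficiently small $C_T(M)$-ball around $\hat p_0$ intersects $\xx_0$ only at $\hat p_0$, so $\{\hat p_0\}$ is relatively open in $\xx_0$, as needed.

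Applying Lemma \ref{conn1} with this choice of $K$ then gives: for any open neighborhood $V$ of $\hat p_0$ in $C_T(M)$ that is small enough (in particular, contained in the $C_T$-ball above where isolation holds), there exists $\delta_V>0$ such that $[0,\delta_V]\X\Fr{V}$ contains no $T$-pair of \eqref{eq0}. The only delicate point is the isolation step, which hinges on correctly combining the spectral non-resonance condition on $g'(p_0)$ with Cauchy uniqueness to rule out nearby non-constant $T$-periodic orbits; once that is in place, the statement reduces to a direct invocation of Lemma \ref{conn1} using the locally compact structure of $\xx$ recorded in Remark \ref{compl}.
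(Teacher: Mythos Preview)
Your proposal is correct and follows essentially the same approach as the paper: both arguments show that $\{\hat p_0\}$ is isolated in the slice $\xx_0$ (using that $(P_T^0)'(p_0)=e^{Tg'(p_0)}$ and non-$T$-resonance makes $I-e^{Tg'(p_0)}$ injective on $T_{p_0}M$), and then invoke Lemma~\ref{conn1} together with the local compactness of $\xx$ from Remark~\ref{compl}. The only cosmetic difference is that the paper carries out the isolation step via a sequential contradiction argument (passing to a limit of normalized increments $u_n=(p_n-p_0)/|p_n-p_0|$), whereas you appeal directly to the invertibility of $I-e^{Tg'(p_0)}$ to conclude that $p_0$ is an isolated fixed point of $P_T^0$; these are two presentations of the same underlying fact.
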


\begin{proof}
Since the set $\xx$ of the $T$-pairs of \eqref{eq0} is locally compact (see Remark
\ref{compl}), there exists an open neighborhood $W$ of $\hat p_0$ in $C_T(M)$ and $\mu >0$ 
such that $\big([0,\mu]\X\cl{W}\big)\cap\xx$ is compact. 

We claim that $(\{0\}\X\cl{W})\cap\xx=\{(0,\hat p_0)\}$ when $W$ is sufficiently small. 
To prove this claim, assume by contradiction,that there exists a sequence of nontrivial 
$T$-pairs of the form $\{(0,x_n)\}_{n\in\N}$ converging to $(0,\hat p_0)$.  Thus 
$\big| x_n(t)-p_0\big|\to 0$ uniformly in $t$ as $n\to\infty $. In particular, 
$\lim_{n\to\infty}x_n(0)=p_0$. Put 
\begin{equation*}
p_n:=x_n(0)\quad\text{and}\quad u_n:=\frac{p_n-p_0}{\left| p_n-p_0\right| }.
\end{equation*}
We can assume that $u_n\to u\in T_pM$. Let $P_t\colon M\to M$ be the Poincar\'e 
$t$-translation operator associated with the equation $\dot x=g(x)$. Since $g$ is 
$C^1$, the map $P_t(\cdot )$ is differentiable. Define $\Phi\colon M\to\R^k$ as 
$\Phi(\xi )=\xi-P_T(\xi)$. Then, $\Phi$ is differentiable as well and 
$\Phi(p_n)=\Phi(p_0)=0$. Thus, 
\begin{equation*}
\Phi'(p_0)u=\lim_{n\to\infty}\frac{\Phi(p_n)-\Phi(p_0)}{\left| p_n-p_0\right| }=0.
\end{equation*}
On the other hand $\Phi'(p_0):T_{p_0}M\to\R^k$ is given by $\Phi'(p_0)v=v-P_T'(p_0)v$
for all $v\in T_{p_0}M$.
As it is well known, the map $\alpha\colon t\mapsto P_t'(p_0)u$ satisfies the Cauchy 
problem 
\begin{equation}\label{CPalpha}
\left\{ 
\begin{array}{l}
\dot \alpha (t)=g^{\prime }(p_0)\alpha (t) \\ 
\alpha (0)=u\;.
\end{array}
\right.
\end{equation}
Since $p_0$ is not $T$-resonant, $\Phi'(p_0)u=\alpha(0)-\alpha(T)\neq 0$. This 
contradiction proves the claim.

Let us now complete the proof. Take $W$ satisfying the above properties. We have that 
$\{(0,\hat p_0)\}=\cl{W}\cap\xx_0$ is compact and it is, being isolated, relatively open 
in $\xx_0$. The assertion now follows from Lemma \ref{conn1}.
\end{proof}

Lemma \ref{stretto} might look a little trivial at first. In fact, if $M=\R^k$ and 
$f$ is sufficiently regular, its assertion can be deduced by the implicit function 
theorem applied to the equation $\F(\lambda,p)=0$, where $\F$ is as in Section 
\ref{secNPR}. However, Lemma \ref{stretto} holds true on any differentiable manifold
and even when $f$ is only continuous.

\begin{remark}\label{remEj}
Let $p_0$ be a non-$T$-resonant zero of $g$ and let $V$ be a neighborhood of $p_0$ in 
$C_T(M)$ as in Lemma \ref{stretto}. Put $\Omega=[0,\infty)\times V$. It is not 
difficult to show that $\deg(g,\Omega\cap M)=\idx(g,p_0)\neq 0$.  Theorem \ref{tuno}
shows that there exists a connected set $\Gamma\subseteq\Omega$ of nontrivial 
$T$-pairs of \eqref{eq0} in whose closure in $[0,\infty)\X C_T(M)$ intersects 
$p_0$ and is not contained in any compact subset of $\Omega$. Thus, a combination
of Theorem \ref{tuno} and Lemma \ref{stretto} shows that $p_0$ is an isolated point of 
$\xx_0$ which is ejecting. It also imply that, if $\delta_V>0$ is as in Lemma 
\ref{stretto}, then $\Pr1(\Gamma)\supseteq [0,\delta_V]$. Here $\Pr1$ denotes the 
projection of $[0,\infty)\X C_T(M)$ onto the first factor.
\end{remark}

Our approach to multiplicity of $T$-periodic solutions is based on the following 
abstract multiplicity result from \cite{FPS00}:

\begin{theorem}[\cite{FPS00}]
\label{conn2} Let $Y$ be a metric space and let $X$ be a locally compact
subset of $[0,\infty )\times Y$. Assume that $X_0$ contains $n$ pairwise
disjoint ejecting subsets, $n-1$ of which are compact.
Then there exists $\lambda _{*}>0$ such that $\# X_\lambda \geq n$ for any
$\lambda \in [0,\lambda _{*})$.
\end{theorem}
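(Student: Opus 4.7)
The plan is to produce pairwise disjoint open regions $U_1,\ldots,U_{n-1},W$ in $Y$---one for each of the $n$ ejecting sets---such that, for every sufficiently small $\lambda>0$, each of these regions contains a point of $X_\lambda$, thereby exhibiting $n$ distinct points in $X_\lambda$.

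First I would set up the geometry. Since each $A_i$ with $i<n$ is compact and relatively open in $X_0$, I pick an open $V_i\subseteq Y$ with $V_i\cap X_0 = A_i$ and, using compactness of the $A_i$'s, shrink to pairwise disjoint open sets $U_i$ with $A_i\subseteq U_i\subseteq\overline{U_i}\subseteq V_i$. Then $\overline{U_i}\cap X_0\subseteq A_i$, and pairwise disjointness of the $A_j$'s puts $A_n$ in the open complement $W:=Y\setminus\bigcup_{i<n}\overline{U_i}$; a direct point-set computation gives $\mathrm{Fr}(W)\subseteq\bigcup_{i<n}\mathrm{Fr}(U_i)$. Invoking Lemma \ref{conn1} on each compact $A_i$ (shrinking $U_i$ further if necessary, and using local compactness of $X$) yields $\delta_i>0$ with $X\cap([0,\delta_i]\times\mathrm{Fr}(U_i))=\emptyset$. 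Setting $\delta^*:=\min_{i<n}\delta_i>0$, one automatically has $X\cap([0,\delta^*]\times\mathrm{Fr}(W))=\emptyset$ as well.

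Next I would exploit the ejecting property through a clopen-in-tube argument. For each $i\leq n$ the ejecting definition yields a connected $C_i\subseteq X$ meeting $A_i$ and not contained in $X_0$. I claim there exists $\rho_i\in(0,\delta^*]$ such that $X_\lambda\cap U_i\neq\emptyset$ for every $\lambda\in[0,\rho_i]$ when $i<n$, and $X_\lambda\cap W\neq\emptyset$ for every $\lambda\in[0,\rho_n]$. For $i<n$ I argue by contradiction: if there were a sequence $\lambda_k\to 0^+$ in $(0,\delta^*]$ with $X_{\lambda_k}\cap U_i=\emptyset$, then the set $X\cap([0,\lambda_k]\times U_i)$ would be open in $X$ (since it coincides with $X\cap([0,\lambda_k)\times U_i)$) and closed in $X$ (since, by the tube property, it coincides with $X\cap([0,\lambda_k]\times\overline{U_i})$), hence clopen. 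Being nonempty, by connectedness of $C_i$ it would absorb the whole $C_i$; letting $k\to\infty$ would force $C_i\subseteq\{0\}\times U_i\subseteq X_0$, contradicting ejection. The case $i=n$ is identical with $W$ in place of $U_i$. Setting $\lambda_*:=\min_{1\leq i\leq n}\rho_i>0$, every $\lambda\in[0,\lambda_*)$ has a point of $X_\lambda$ in each $U_i$ and one in $W$, so $\#X_\lambda\geq n$.

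The delicate part, I expect, is the geometric setup: coordinating the $U_i$'s so that they are pairwise disjoint, have frontiers uniformly avoided by $X$ on a common $[0,\delta^*]$, and satisfy $\overline{U_i}\cap X_0\subseteq A_i$ (this last being what forces $A_n$ into $W$). Once this is secured, the clopen-trapping step is quite clean. The asymmetry of the hypothesis---only $n-1$ ejecting sets need be compact---is absorbed by not building an explicit tube around $A_n$: the complement $W$ inherits the required frontier-avoidance from the $U_i$'s via $\mathrm{Fr}(W)\subseteq\bigcup_{i<n}\mathrm{Fr}(U_i)$, so the same clopen dichotomy does the job.
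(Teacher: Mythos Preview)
The paper does not supply its own proof of Theorem~\ref{conn2}; it is quoted verbatim from \cite{FPS00} and used as a black box in the proof of Theorem~\ref{genmu}. So there is nothing in the present paper to compare your argument against.

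That said, your proposal is sound and is essentially the standard argument one would expect (and the one in \cite{FPS00}): isolate the $n-1$ compact ejecting sets in pairwise disjoint neighborhoods $U_i$ whose frontiers are avoided by $X$ on a short $\lambda$-interval (this is exactly what Lemma~\ref{conn1} is for), let the $n$-th ejecting set sit in the complement $W$, and then run a connectedness/clopen argument on each connected witness $C_i$ to force it to meet every slice $X_\lambda$ for small $\lambda$. Your handling of the asymmetry---using $\mathrm{Fr}(W)\subseteq\bigcup_{i<n}\mathrm{Fr}(U_i)$ so that $W$ inherits the tube property without any compactness assumption on $A_n$---is correct and is the key observation. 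The only place to be mildly careful is the order of shrinkings: Lemma~\ref{conn1} guarantees the frontier-avoidance only for \emph{sufficiently small} neighborhoods of each $A_i$, so one should first fix the $V_i$ with $V_i\cap X_0=A_i$, then invoke Lemma~\ref{conn1} to obtain the $U_i$ already small enough, and finally intersect to make them pairwise disjoint with $\overline{U_i}\subseteq V_i$. You acknowledge this with your parenthetical ``shrinking $U_i$ further if necessary'', which is adequate.
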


We are now in a position to state our main multiplicity result
(compare \cite[Th.\ 3.7]{FPS00} and \cite[Th.\ 4.6]{S03}).

\begin{theorem}\label{genmu}
Let $g\colon M\to\R^k$ and $f\colon\R\X M\to\R^k$ be tangent to the closed 
boundaryless submanifold $M$ of $\R^k$. Assume that $g$ is $C^1$, $g^{-1}(0)$ is 
compact and $f$ is $T$\hbox{-}periodic in $t$. Assume also that
\begin{enumerate}\renewcommand{\theenumi}{\roman{enumi}}
\item\label{cond1} There are $n-1$ non\hbox{-}$T$\hbox{-}resonant zeros of $g$, 
$p_1,\ldots,p_{n-1}$ such that
\begin{equation}\label{eqgrad}
\sum_{i=1}^{n-1}\idx(g,p_i) \neq \deg(g,M) ;
\end{equation}
\item\label{cond2} The unperturbed equation
\begin{equation}\label{unperturbed}
\dot x =g(x)
\end{equation}
does not admit unbounded connected sets of $T$\hbox{-}periodic solutions in $C_T(M)$.
\end{enumerate}
Then, for $\lambda >0$ sufficiently small, the Equation \eqref{eq0} admits at least 
$n$ geometrically distinct $T$\hbox{-}periodic solutions.
\end{theorem}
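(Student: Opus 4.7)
The plan is to invoke the abstract multiplicity result, Theorem \ref{conn2}, with $X:=\xx$ and $Y:=C_T(M)$. We must then produce $n$ pairwise disjoint ejecting subsets of $\xx_0$, $n-1$ of which are compact. The first $n-1$ will be the singletons $\{\hat p_i\}$ furnished by the non-$T$-resonant zeros, and the $n$-th will be constructed from a global branch supplied by Theorem \ref{tuno}.

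First, for each $i=1,\dots,n-1$, Lemma \ref{stretto} and Remark \ref{remEj} produce pairwise disjoint open neighborhoods $V_i$ of $\hat p_i$ in $C_T(M)$ and numbers $\delta_i>0$ such that $[0,\delta_i]\X\Fr{V_i}$ contains no $T$-pair of \eqref{eq0} and such that $\{\hat p_i\}$ is an isolated ejecting point of $\xx_0$. After possibly shrinking the $V_i$ we may further arrange that $\cl{V_i}\cap g^{-1}(0)=\{\hat p_i\}$, under the identification of constant functions with points of $M$ prescribed by \eqref{diagram}. Setting $V_0:=C_T(M)\setminus\bigcup_{i=1}^{n-1}\cl{V_i}$ and $\Omega_0:=[0,\infty)\X V_0$, additivity and excision for the degree, combined with hypothesis \eqref{eqgrad}, yield
\[
\deg(g,\Omega_0\cap M)=\deg(g,M)-\sum_{i=1}^{n-1}\idx(g,p_i)\neq 0.
\]
Theorem \ref{tuno} then provides a connected set $\Gamma\subseteq\Omega_0$ of nontrivial $T$-pairs whose closure meets $g^{-1}(0)\cap\Omega_0$ and is not contained in any compact subset of $\Omega_0$.

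I claim that $A_n:=\xx_0\cap V_0$, which is relatively open in $\xx_0$ and disjoint from $\{\hat p_1,\dots,\hat p_{n-1}\}$, is the sought $n$-th ejecting set. Indeed $\cl[\xx]{\Gamma}$ is connected and meets $A_n$ at a trivial $T$-pair in $g^{-1}(0)\cap\Omega_0$; what remains is to check that $\cl[\xx]{\Gamma}\not\subseteq\xx_0$. This is the single point at which hypothesis (ii) enters, and is the main obstacle: were $\Gamma$ entirely concentrated at $\lambda=0$, it would be a connected set of $T$-periodic solutions of \eqref{unperturbed}, hence bounded in $C_T(M)$ by (ii); by Lemma \ref{stretto} applied at $\lambda=0\leq\delta_i$ it could not approach any $\Fr{V_i}$, so $\cl[\xx]{\Gamma}$ would be closed and bounded inside $\Omega_0$, hence compact by Remark \ref{compl} since $M$ is closed in $\R^k$, contradicting Theorem \ref{tuno}.

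With the $n$ pairwise disjoint ejecting sets $\{\hat p_1\},\dots,\{\hat p_{n-1}\},A_n$ in hand, and the first $n-1$ compact, Theorem \ref{conn2} delivers $\lambda_*>0$ with $\#\xx_\lambda\geq n$ for every $\lambda\in[0,\lambda_*)$. To promote these $n$ distinct elements of $\xx_\lambda$ to \emph{geometrically} distinct $T$-periodic solutions of \eqref{eq0}, one observes that by shrinking $\lambda_*$ if necessary the $n-1$ branches emerging from $\hat p_i$ remain inside the disjoint neighborhoods $V_i$, while the branch coming from $A_n$ avoids every $\cl{V_i}$. Since solutions sitting in two different $V_i$, or in $V_0$ versus some $V_i$, have images confined to disjoint regions of $M$, no relation of the form $x(t+\tau)=y(t)$ can hold, yielding the required geometric distinctness.
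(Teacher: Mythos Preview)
Your argument is correct and follows essentially the same route as the paper's proof: isolate the $n-1$ non-$T$-resonant zeros in small neighborhoods, compute the degree on the complement via additivity, invoke Theorem \ref{tuno} there, and feed the resulting $n$ ejecting sets into Theorem \ref{conn2}. The only cosmetic difference is that you take the $V_i$ as neighborhoods in $C_T(M)$ whereas the paper takes them in $M$ and then passes to $C_T(V_0)$; your treatment of why $\Gamma\not\subseteq\xx_0$ is in fact more explicit than the paper's one-line appeal to assumption~(ii). One small imprecision: in the last paragraph, a solution lying in $V_0$ need not have image confined to a region of $M$ disjoint from the images of solutions in the $V_i$; the correct (and easy) point is that time-shifts preserve membership in a sup-norm ball about a constant, so a solution in $V_i$ and one outside $\cl{V_i}$ cannot be time-translates of each other.
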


\begin{example}\label{exNTse}
Take $M=\R$ and let $g(x)=\frac{x+x^2}{1+x^2}$ and $f(t,x)=\sin t$ with $T=2\pi$. We have two 
non-$T$-resonant zeros, $p_0=0$ and $p_1=-1$. This implies the existence of two geometrically 
independent $T$-periodic solutions for small values of $\lambda\geq 0$. Figure \ref{figNTse} 
shows the set of starting points near $p_0$ and $p_1$.
\end{example}
\begin{figure}[h!]
 \includegraphics[width=0.45\linewidth]{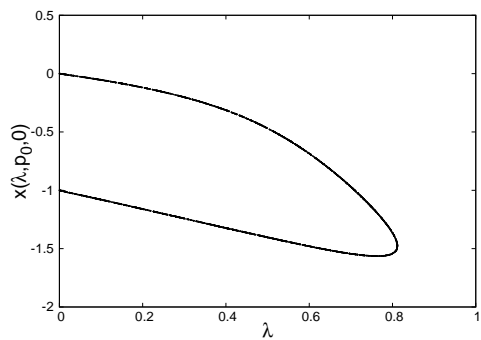}
 \caption{The set of starting points of Example \ref{exNTse}.}\label{figNTse}
\end{figure}

Note that, when $M$ is a compact boundaryless manifold condition (\ref{cond2}) of Theorem 
\ref{genmu} holds automatically. Also, in this case, by the Poincar\'e\hbox{-}Hopf 
theorem one gets $\deg(g,M)=\chi(M)$, so that \eqref{eqgrad} becomes
\[
\sum_{i=1}^{n}\idx(g,p_i) \neq\chi (M).
\]
Thus we have the following consequence of Theorem \ref{genmu}:
\begin{corollary}\label{cgenmu}
 Let $M\subseteq\R^k$ be a compact boundaryless manifold, and let $f$ and $g$ be as in
 Theorem \ref{genmu}. Assume that there are $n$ non\hbox{-}$T$\hbox{-}resonant zeros 
 of $g$, $p_1,\ldots,p_{n-1}$ such that
\[
 \sum_{i=1}^{n-1}\idx(g,p_i) \neq \chi(M).
\]
Then, for $\lambda >0$ sufficiently small, the Equation \eqref{eq0} admits at least 
$n$ geometrically distinct $T$\hbox{-}periodic solutions.
\end{corollary}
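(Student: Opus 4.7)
The plan is to deduce Corollary \ref{cgenmu} directly from Theorem \ref{genmu} by verifying that the compactness of $M$ makes condition \eqref{cond2} automatic and that condition \eqref{cond1} reduces to the stated hypothesis through the Poincar\'e--Hopf theorem. No new topological machinery is required; the work is essentially bookkeeping.

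First I would dispose of hypothesis \eqref{cond2}. Since $M\subseteq\R^k$ is compact, it is bounded, so every $x\in C_T(M)$ satisfies $\|x\|_\infty\le\mathrm{diam}(M)<\infty$. Hence $C_T(M)$ itself is bounded in the supremum metric inherited from $C_T(\R^k)$, and in particular it contains no unbounded subset whatsoever. Consequently \eqref{cond2} of Theorem \ref{genmu} holds vacuously. I would also observe that $g^{-1}(0)$, being a closed subset of the compact manifold $M$, is automatically compact, so the admissibility hypothesis built into Theorem \ref{genmu} is satisfied as well.

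Next I would rewrite condition \eqref{cond1}. Because $M$ is compact and boundaryless and $g$ is a $C^1$ tangent vector field admissible on $M$, the Poincar\'e--Hopf theorem (in its form for the degree of a tangent vector field, as recalled in Section \ref{secNPR}) gives
\begin{equation*}
\deg(g,M)=\chi(M).
\end{equation*}
Thus the inequality $\sum_{i=1}^{n-1}\idx(g,p_i)\neq\deg(g,M)$ appearing in \eqref{eqgrad} is literally the same as the hypothesis $\sum_{i=1}^{n-1}\idx(g,p_i)\neq\chi(M)$ of the corollary.

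Finally, with both hypotheses of Theorem \ref{genmu} verified, I would apply that theorem directly to produce $\lambda_*>0$ such that for every $\lambda\in(0,\lambda_*)$ equation \eqref{eq0} admits at least $n$ geometrically distinct $T$\hbox{-}periodic solutions, which is the desired conclusion. There is no substantive obstacle here; if anything, the only point worth being explicit about is that the norm on $C_T(M)$ used in \eqref{cond2} is the one induced from the ambient $C_T(\R^k)$, so that the compactness of $M$ really does trivialize the ``unbounded connected set'' clause.
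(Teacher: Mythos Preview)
Your argument is correct and matches the paper's own reasoning essentially verbatim: the paper notes, just before stating the corollary, that compactness of $M$ makes condition~(\ref{cond2}) automatic and that Poincar\'e--Hopf gives $\deg(g,M)=\chi(M)$, reducing \eqref{eqgrad} to the stated hypothesis. There is nothing to add.
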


\begin{example}
Take $M=S^2\subseteq\R^3$ and $T=2\pi$. Let $g\colon M\to\R^3$ be given by 
\[
g(x,y,z)=\begin{cases}
\big(-2yz^3,2xz^3,0\big) & \text{when $z\geq 0$},\\
\big(xz^3,-yz^3,(y^2-x^2)z^2\big) & \text{when $z\leq 0$}.
\end{cases}
\]
Clearly, $g$ is $C^1$ and the points $p_1:=(0,0,1)$ and $p_2=(0,0,-1)$ are 
non-$T$-resonant zeros. with $\idx(g,p_1)=1$ and $\idx(g,p_2)=-1$. So that
$2=\chi(S^2)\neq\idx(g,p_1)+\idx(g,p_2)=0$. Therefore, for any $2\pi$-periodic
forcing term $f$ Corollary \ref{cgenmu} yields, 3 geometrically distinct 
$2\pi$-periodic solutions of Equation \eqref{eq0} when $\lambda>0$ is sufficiently 
small.
\end{example}

\begin{proof}[Proof of Theorem \ref{genmu}]
Since $p_1$,\ldots,$p_{n-1}$ are non-$T$-resonant, we can find neighborhoods
$V_1$,\ldots,$V_{n-1}$ such that
\[
\cl{V_i}\cap g^{-1}(0)=\{p_i\}\quad\text{for $i=1,\ldots,n-1$}
\]
By the definition of index of an isolated zero and the excision property, 
$\deg(g,V_i)=\mathop{\rm i}(g,p_i)$, for $i=1,\ldots,n-1$. Define
\[
V_0=M\setminus\bigcup_{i=1}^{n-1}\overline{V_i}.
\]
The additivity property of the degree yields
\[
\deg(g,V_0)=\deg(g,M)-\sum_{i=1}^{n-1}\mathop{\rm i} (p_i,g)\neq 0.
\]
Define
\[
\Omega=[0,\infty)\times C_T(V_0)\subset[0,\infty)\times C_T(M).
\]
Theorem \ref{tuno} implies that there exists an unbounded connected set $\Gamma$ 
of nontrivial $T$-pairs of \eqref{eq0} in $\Omega$ whose closure in the space
$[0,\infty)\X C_T(M)$ intersects $g^{-1}(0)\cap\Omega$. By assumption (\ref{cond2})
it follows that $g^{-1}(0)\cap V_0$ is an ejecting set of the set of $T$-pairs 
for \eqref{eq0}. Remark \ref{remEj} shows that $p_1,\ldots,p_{n-1}$ are ejecting
sets (clearly compact).  Thus, by Theorem \ref{conn2}, when $\lambda>0$ is 
sufficiently small there are $n$ different $T$\hbox{-}pairs, whence $n$ different 
$T$\hbox{-}periodic solutions. That these are geometrically distinct follows from
Lemma \ref{stretto}, for possibly smaller values of $\lambda$. 
\end{proof}

\section{$T$-resonance revisited}
Going over the first part of the proof of Lemma \ref{stretto} we see that what we actually 
prove is that if $p$ is not an accumulation point of $\Gamma\cap\big(\{0\}\X M\big)$.
This means that $\Gamma$ does not intersect $\{0\}\X M$ infinitely many times in a
neighborhood of $\{0\}\X\{p\}$, yet many pathological scenarios are possible. 
Something more can be said with the aid of the implicit function theorem if we
assume some more regularity of $f$. In fact, we have to assume that $M=\R^n$ (this
is harmless since we are going to make a local analysis) and allow $\lambda$ to
vary in $\R$ (a whole neighborhood of $0$) instead than in $[0,\infty)$.

Take $M=\R^n$ and assume that $f$ and $g$ are $C^1$. As in Section \ref{secNPR}, 
we set 
\[
\F(\lambda,p)=x(\lambda,p,T)-p
\]
where $x(\lambda,p,\cdot)$ denotes the 
unique maximal solution of \eqref{cpuno} and $\lambda\in\R$. Clearly, a pair 
$(\lambda,p)\in[0,\infty)\X M$ is a starting point (see Remark \ref{StPo}) if and
only if $\F(\lambda,p)=0$ and, also, starting points correspond to $T$-pairs when
$f$ and $g$ are $C^1$.

Let $p_0\in M$ be a non-$T$-resonant zero of $g$. Then, simple computations show 
that, for all $v\in\R^n=T_{p_0}M$,
\begin{gather}
\D_1\F(0,p_0) = e^{Tg'(p_0)}\int_0^Te^{-sg'(p_0)}f(s,p_0)\dif{s};\label{Dl}\\ 
\D_2\F(0,p_0)v = \left(e^{Tg'(p_0)}-I\right)v.\label{Dp}
\end{gather}
Thus, since $p_0$ is non-$T$-resonant, $\D_2\F(0,p_0)$ is nonsingular and, by the implicit 
function theorem, there exists a function $p:\lambda\mapsto p(\lambda)$, defined in a 
neighborhood $U$ of $0$, such that $p(0)=0$ and $\F\big(\lambda,p(\lambda)\big)\equiv 0$,
so that all pairs $\big(\lambda,p(\lambda)\big)$, $\lambda\in U$, are starting points.
Clearly,
\[
 p'(0)=-\left(e^{Tg'(p_0)}-I\right)^{-1}\int_0^Te^{(T-s)g'(p_0)}f(s,p_0)\dif{s}
\]
is well-defined.
This means, geometrically, that when $p_0$ is non-$T$-resonant $\Gamma$ crosses transversally 
$\{0\}\X M$ at $p_0$. Conversely, however, we cannot conclude that this does not happen when 
$p_0$ is $T$-resonant (we shall see this in Example \ref{exnasty}). To investigate this case 
we set $M=\R$. Later, we will consider the case $M=\R^2$ and, following the lead of Lemma
\ref{stretto}, seek a second-order sufficient condition for an isolated zero with nonzero 
index to be ejecting. 

\begin{figure}[h!]
 \begin{tabular}{c@{\hskip 1cm}c}
  \subfigure[$\lambda\in[0,0.8)$]{\includegraphics[height=0.32\linewidth]{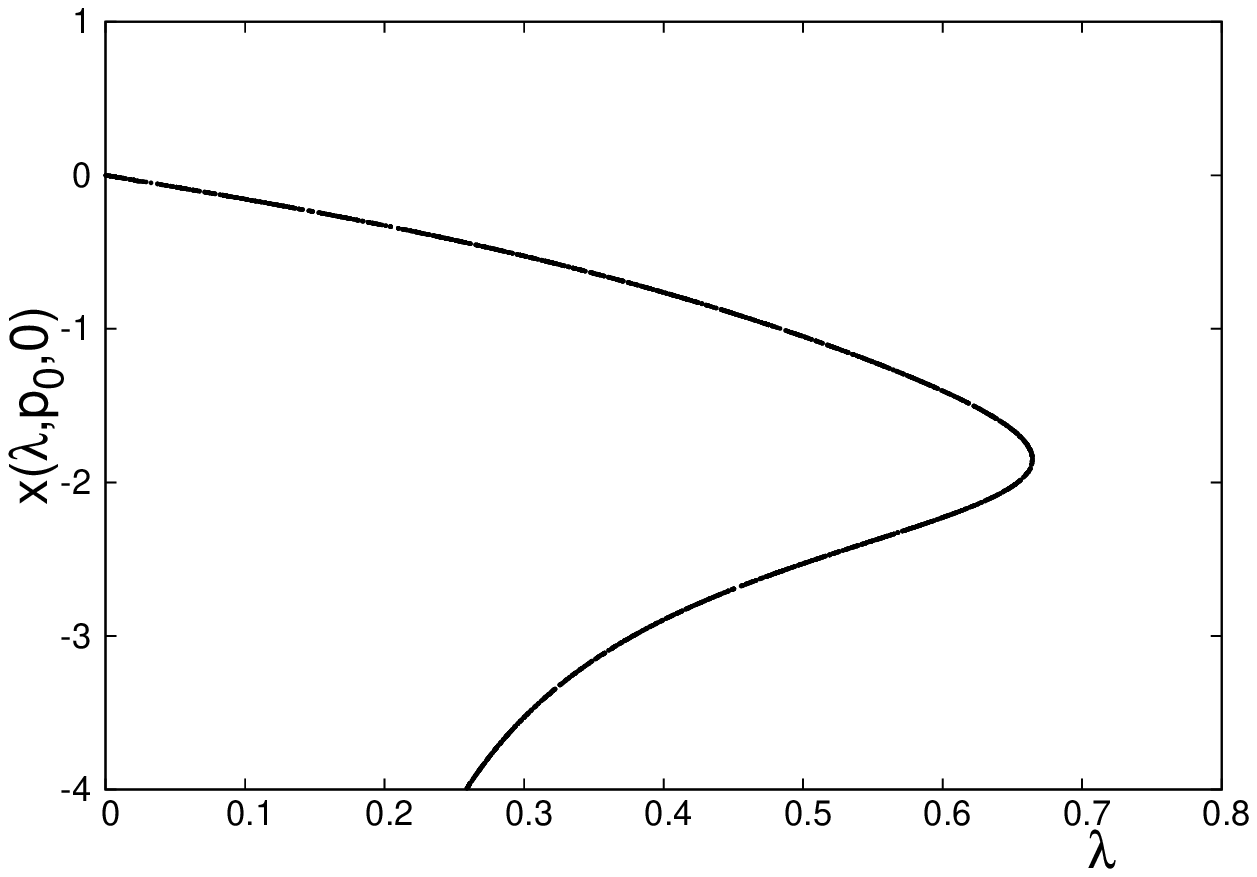}\label{figsimpA}}
  &
  \subfigure[$\lambda\in(-0.3,0.3)$]{\includegraphics[height=0.32\linewidth,width=0.32\linewidth,%
                         ]{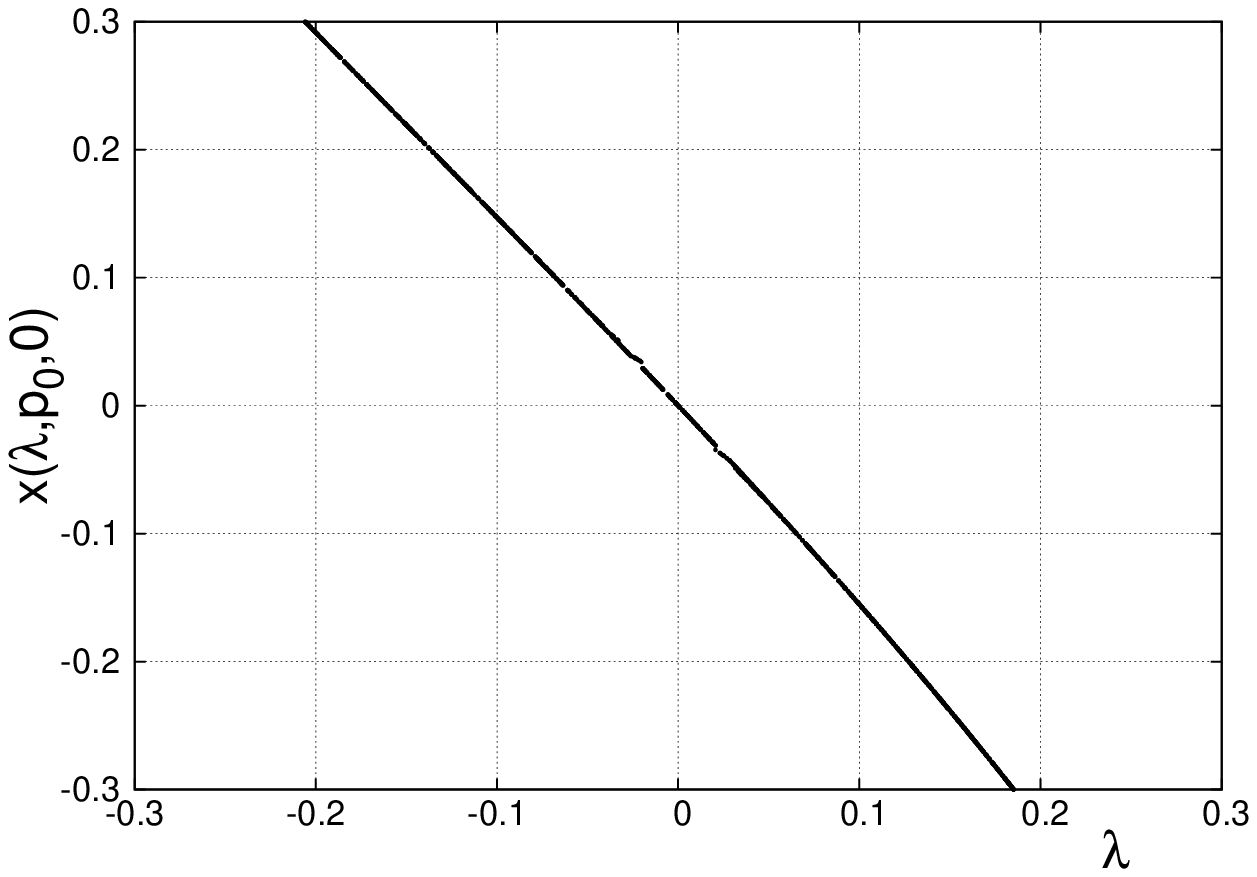}\label{figsimpB}}
 \end{tabular}
\caption{The set of starting points for Example \ref{exsimp}.}
\end{figure}
\begin{example}\label{exsimp}
 Let $M=\R$ and $g(x)=\frac{x}{1+x^2}$ and $f(t,x)=1+\cos(x+t)$ with $T=2\pi$. Clearly $g$
 has exactly one zero $p_0=0$ which is non-$T$-resonant.  Thus the implicit function theorem 
 yields exists a function $p:\lambda\mapsto p(\lambda)$, defined in a neighborhood of $0$, such 
 that $p(0)=0$ and the pairs $\big(\lambda,p(\lambda)\big)$ are starting points for \eqref{eq0}
 for small values of $|\lambda|$. Figure \ref{figsimpA} shows a portion of the set $\Sc$ of
 starting points for the above choices of $g$ and $f$. A simple computation yields
 $p'(0)=-3/2$ as it is illustrated by Figure \ref{figsimpB}.
\end{example}

\subsection{The case $M=\R$.}
In general, Remark \ref{remEj} shows that if $p_0\in M$ is a non-$T$-resonant zero of 
$g$ then the index of $p_0$ is nonzero and $p_0$ is ejecting. When $M=\R$ all isolated
zeros of $g$ with nonzero index are necessarily ejecting, as shown below.
\begin{lemma}\label{Rej}
 Let $M=\R$ and let $p_0$ be an isolated zero of $g$ such that $\idx(g,p_0)\neq 0$.
 Then $p_0$ is an ejecting point for the set $\xx$ of $T$-pairs of \eqref{eq0}. 
 Consequently, $p_0$ is an ejecting point for the set $\Sc$ of starting points of 
 \eqref{eq0}.
\end{lemma}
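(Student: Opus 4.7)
My plan is to verify directly the two defining conditions of an ejecting set for the singleton $\{p_0\}\subseteq\xx_0$ (with the identifications coming from diagram \eqref{diagram}): that it is relatively open in $\xx_0$, and that there is a connected subset of $\xx$ meeting $\{p_0\}$ and not contained in $\xx_0$. The one-dimensionality of $M=\R$ is exploited at the outset to observe that every $T$-periodic solution of the autonomous equation $\dot x=g(x)$ is necessarily constant; indeed a nonconstant solution is strictly monotone on every subinterval of $\R$ on which $g$ keeps constant sign, hence cannot be periodic. It follows that $\xx_0$ is identified with $\{0\}\X g^{-1}(0)$, and because $p_0$ is isolated in $g^{-1}(0)$ the singleton $\{p_0\}$ is automatically relatively open in $\xx_0$.

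The connected set required by the second condition will be produced via Theorem \ref{tuno}. I would pick an open interval $V\ni p_0$ with $V\cap g^{-1}(0)=\{p_0\}$ and set $W:=\{x\in C_T(\R):x(t)\in V\text{ for every }t\}$, which is an open neighborhood of $\hat p_0$ in $C_T(\R)$. Putting $\Omega:=[0,\infty)\X W$, one has $\Omega\cap M=V$ and, by excision, $\deg(g,\Omega\cap M)=\idx(g,p_0)\neq 0$. Theorem \ref{tuno} then provides a connected set $\Gamma$ of nontrivial $T$-pairs contained in $\Omega$ whose closure in $[0,\infty)\X C_T(M)$ intersects $g^{-1}(0)\cap\Omega=\{p_0\}$. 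By the observation above, the only $T$-periodic solution of $\dot x=g(x)$ whose image lies in $V$ is the constant $\hat p_0$, so every nontrivial $T$-pair lying in $\Omega$ must have $\lambda>0$. Since $p_0$ belongs to the closure of $\Gamma$, the set $\Gamma\cup\{p_0\}\subseteq\xx$ is connected, meets $\{p_0\}$, and contains points with $\lambda>0$, and is therefore not contained in $\xx_0$. This establishes that $p_0$ is an ejecting point for $\xx$.

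The concluding sentence about $\Sc$ follows immediately from Remark \ref{StPo}: under the standing smoothness hypotheses on $f$ and $g$, the map $h\colon\xx\to\Sc$ is a homeomorphism carrying $\xx_0$ bijectively onto $\Sc_0$ and sending $(0,\hat p_0)$ to $(0,p_0)$, so the homeomorphic image of an ejecting set is again ejecting. The only genuine subtlety I expect is the step that excludes nontrivial pairs at $\lambda=0$ inside $\Omega$; this is where the one-dimensional dynamics is decisive and allows us to dispense with the non-$T$-resonance hypothesis that was essential in Lemma \ref{stretto}.
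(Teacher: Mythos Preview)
Your proof is correct and follows essentially the same approach as the paper's sketch: both exploit the one-dimensional fact that $T$-periodic solutions of the autonomous scalar equation $\dot x=g(x)$ are necessarily constant to see that $(0,\hat p_0)$ is isolated in $\xx_0$, and then invoke Theorem \ref{tuno} to produce the connected set required by Definition \ref{defej}. The only cosmetic difference is that the paper routes the argument through Lemma \ref{conn1} (mirroring the proof of Lemma \ref{stretto}) and then implicitly appeals to the reasoning of Remark \ref{remEj}, whereas you verify the two defining conditions of an ejecting point directly from Theorem \ref{tuno}; your presentation is, if anything, a slight streamlining.
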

\begin{proof}[Sketch of the proof.]
 As in the proof of Lemma \ref{stretto}, since the set $\xx$ of the $T$-pairs of \eqref{eq0} 
 is locally compact, there exists an open neighborhood $W$ of $p_0$ in $C_T(\R)$ and 
 $\mu >0$ such that $\xx\cap\big([0,\mu]\times\cl{W}\big)$ is compact. 
 
 Since $T$-periodic solutions of an autonomous equation on $\R$ are necessarily constant
 and $p_0$ is an isolated zero, we see that $(\{0\}\X\cl{W})\cap\xx=\{(0,p_0)\}$. The proof
 is concluded in the same way as the proof of Lemma \ref{stretto}.
\end{proof}

Let $p_0\in\R$ be an isolated zero of $g$. Assume that $p_0$ is $T$-resonant,
thus $e^{Tg'(p_0)}=1$, and, since $M=\R$, $g'(p_0)=0$. In particular $\D_2\F(0,p_0)=0$ and
$\D_1\F(0,p_0)=\int_0^Tf(s,p_0)\dif{s}$. If we suppose that $\D_1\F(0,p_0)\neq 0$, the 
implicit function theorem yields a function $\lambda:p\mapsto\lambda(p)$, defined in a 
neighborhood $V$ of $p_0$, such that $\lambda(p_0)=0$ and $\F\big(\lambda(p),p\big)\equiv 0$ 
for $p\in V$. Thus, in a neighborhood of $p_0$, the set $\Gamma$ of Theorem \ref{tuno} is 
the graph of the map $\lambda:p\mapsto\big(\lambda(p),p\big)$, where $\big(\lambda(p),p\big)$ 
is regarded as a $T$-pair according to diagram \eqref{diagram}. Geometrically, since 
\[
\lambda'(p_0)= -\big(\D_1\F(0,p_0)\big)^{-1}\D_2\F(0,p_0) = 
              -\frac{e^{Tg'(p_0)}-1}{\int_0^Tf(s,p_0)\dif{s}} = 0,
\] 
this means that $\Gamma$ is tangent to $\{0\}\X\R$ at $p_0$.

\begin{example}\label{extang}
Let $g(x)=\frac{x^3}{1+x^2}$ and $f(t,x)=1+\sin t$ with $T=2\pi$. We have one $T$-resonant
zero $p_0=0$, but $\int_0^T f(s,p_0)\dif{s}\neq 0$. Thus the set $\Sc$ of starting points about $0$
is tangent to the axis $\lambda=0$, as shown in Figure \ref{figtang}.
\end{example}
\begin{figure}[h!]
\includegraphics[width=0.45\linewidth]{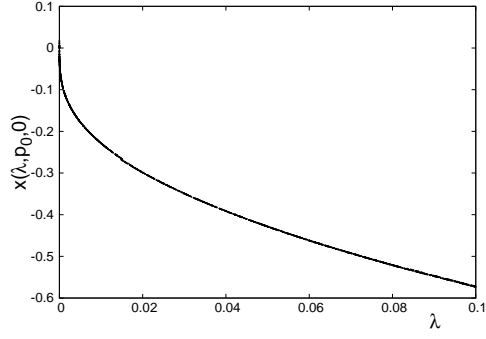}
\caption{The set of starting points of Example \ref{extang}.}\label{figtang}
\end{figure}

Notice that 
nothing can be deduced through the implicit function theorem if we have both $g'(p_0)=0$ and 
$\int_0^Tf(s,p_0)\dif{s}=0$. An example in this sense is provided below.
\begin{example}\label{exnasty}
Let $g(x)=\frac{x^3+x^2}{1+x^2}$ and $f(t,x)=\sin (t+x)$, $T=2\pi$. We have one $T$-resonant
 $p_1=0$ and one non-$T$-resonant zero $p_2=-1$, respectively. Since 
$\int_0^T f(s,p_1)\dif{s}=0$. So the shape of the set $\Sc$ of starting points about $p_1$ cannot 
be determine by the method above. Figure \ref{fignastyA} shows a portion of the set of starting 
points for \eqref{eq0} when $g$ and $f$ are selected as above. Allowing $\lambda$ in a complete
neighborhood of $0$ lets us understand the structure (see Figure \ref{fignastyB}).
\end{example}
\begin{figure}[h!]
\begin{tabular}{c@{\hskip 1cm}c}
\subfigure[$\lambda\in [0,1/2)$]{\includegraphics[width=0.45\linewidth]{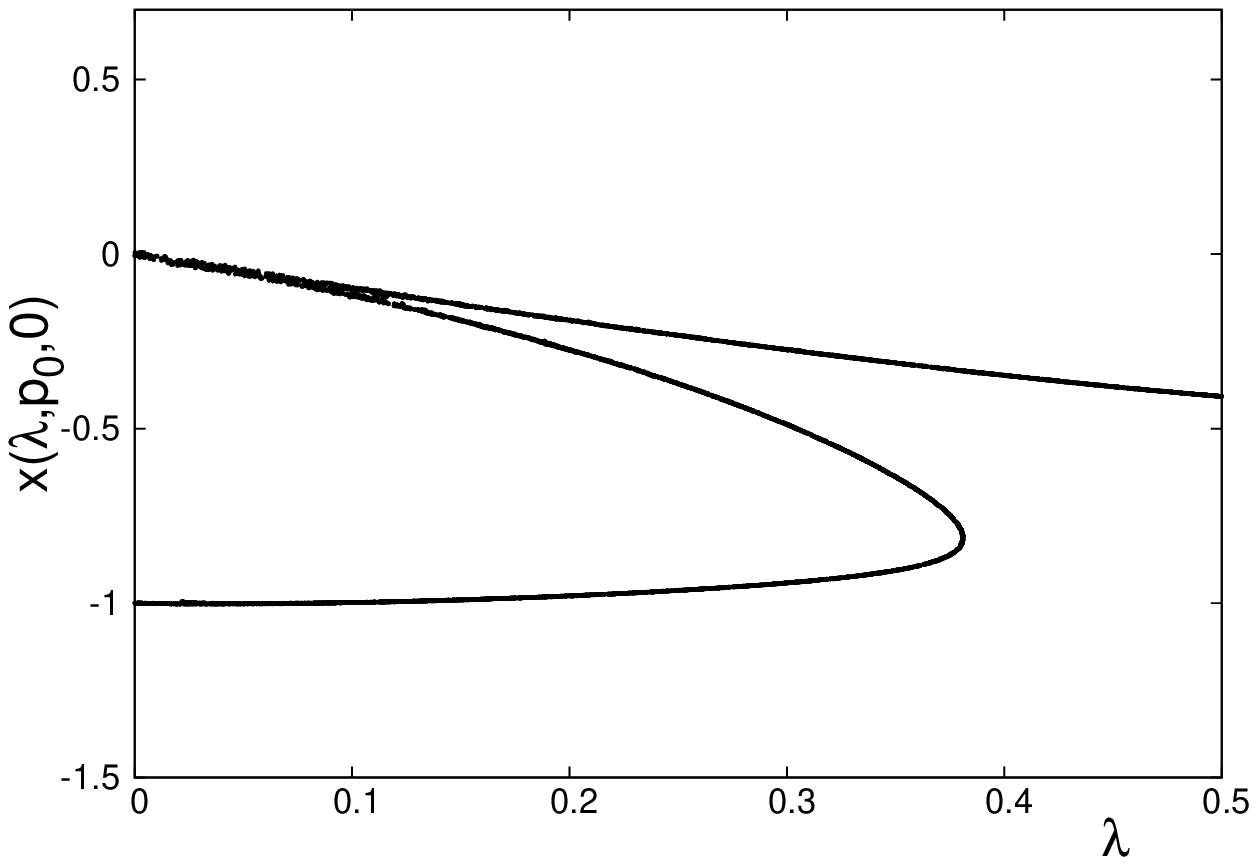}\label{fignastyA}} &
\subfigure[$\lambda\in (-1/2,1/2)$]{\includegraphics[width=0.45\linewidth]{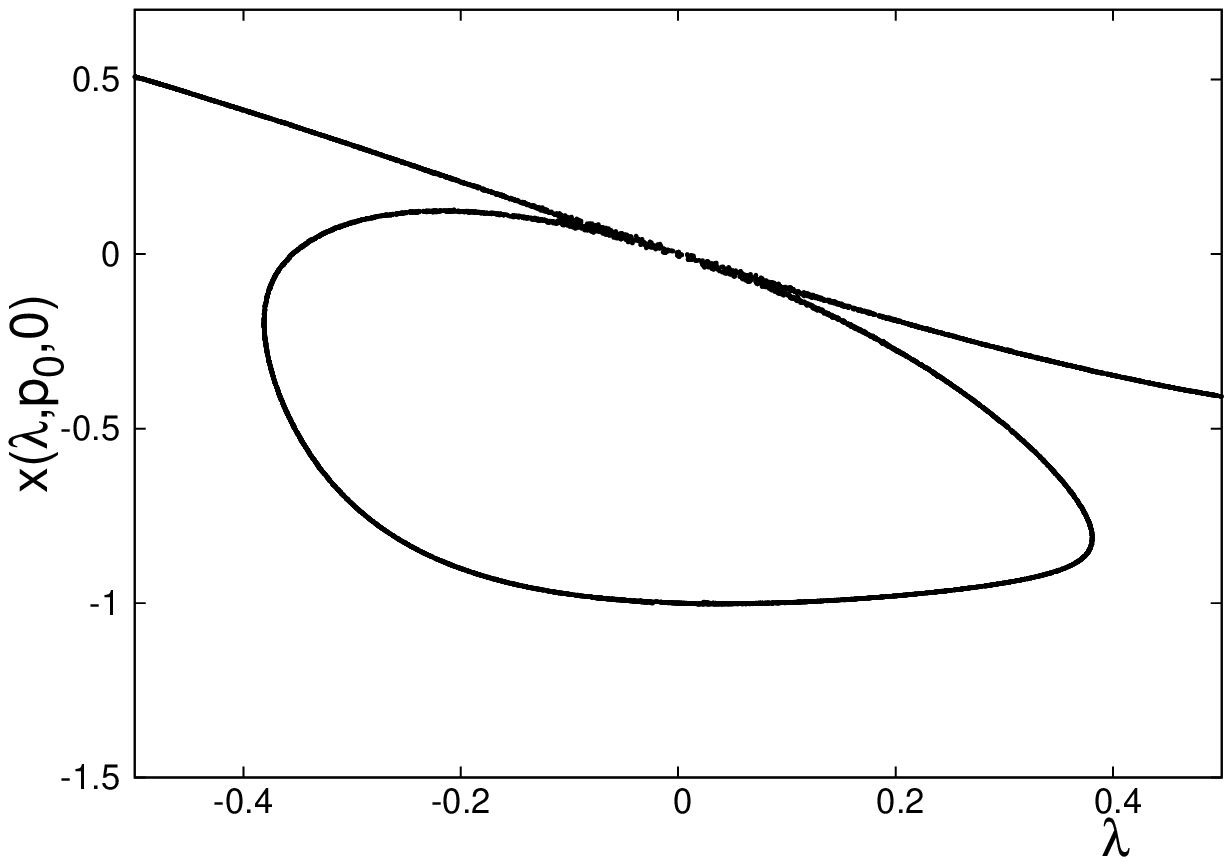}\label{fignastyB}}
\end{tabular}
\caption{The set of starting points of Example \ref{exnasty}.}\label{fignasty}
\end{figure}

We get more information about the shape of $\Gamma$ by computing the second derivative of
the function $\lambda$.

Let us stay with the assumption that $p_0$ is $T$-resonant, so that we have $g'(p_0)=0$, and 
assume in addition that $f$ and $g$ are $C^2$ functions. As above, we suppose that 
$\int_0^Tf(s,p_0)\dif{s}$ is not zero. Recall that for a $C^2$ function 
$\varphi\colon D\subseteq\R\to\R$, $D$ open, we have
\[
 \varphi''(\xi)=\lim_{h\to 0}\frac{\varphi(\xi+hv)-2\varphi(\xi)+\varphi(\xi-hv)}{h^2},
\]
for any $\xi\in D$. Then we have
\begin{equation}\label{fd22}
\begin{split}
 \D_{22}x(0,p_0,&t)= \\
= & \lim_{h\to 0} \frac{1}{h^2}\int_0^t
      \Big[g\big(x(0,p_0+h,s)\big)-2g\big(x(0,p_0,s)\big)+g\big(x(0,p_0-h,s)\big)\Big]\dif{s}\\
= & \int_0^t\left[\lim_{h\to 0^+}\frac{ g'(p_0)\D_{22}x(0,p_0,t)h^2
                     +g''(p_0)[\D_{2}x(0,p_0,s)h]^2+o(h^2)}{h^2}\right]\dif{s},
\end{split}
\end{equation}
and, since $g'(p_0)=0$, we get
\[
 \D_{22}x(0,p_0,t)= \int_0^t g''(p_0)[\D_{2}x(0,p_0,s)]^2\dif{s}
\]
Thus, setting $\alpha(t)=\D_{2}x(0,p_0,t)$ and $\beta=\D_{22}x(0,p_0,t)$, we have that $\beta$
solves the following Cauchy problem:
\[
 \left\{ 
  \begin{array}{l}
   \dot\beta(t)=g''(p_0)\alpha(t)^2,\\
   \beta(0)=0.
  \end{array}
 \right. 
\]
It is well-known that $\alpha$ solves \eqref{CPalpha} with $u=1$. Since $g'(p_0)=0$, we get 
$\alpha(t)\equiv 1$, so that $\D_{22}\F(0,p_0)=\D_{22}x(0,p_0,t)=Tg''(p_0)$. In order to 
determine $\lambda''(p_0)$ we differentiate twice the identity $\F\big(\lambda(p),p\big)=0$
at $p_0$:
\[
 \D_{12}\F(0,p_0)\lambda'(p_0)+\D_{1}\F(0,p_0)\lambda''(p_0)+\D_{22}\F(0,p_0)=0.
\]
Recalling again that $p_0$ is $T$-resonant we have $\lambda'(p_0)=0$. The above relation, 
along with \eqref{Dl}, yields
\begin{equation}\label{secderl}
 \lambda''(p_0)=-\frac{g''(p_0)}{\frac{1}{T}\int_0^Tf(s,p_0)\dif{s}}.
\end{equation}

Suppose that $g''(p_0)\neq 0$. If the signs of $\int_0^Tf(s,p_0)\dif{s}$ and $g''(p_0)$
disagree, then $\lambda''(p_0)>0$. This implies that $\lambda$ is a convex function in a
neighborhood of $p_0$. This fact has the following interesting consequence:

\begin{proposition}\label{proNTomu}
 Let  $M=\R$ and let $p_0$ be an isolated $T$-resonant zero of $g$. Assume that $f$ and $g$ 
are $C^2$ functions. Then,
\begin{enumerate}\renewcommand{\theenumi}{\roman{enumi}}
\item\label{r1} Let $g''(p_0)\int_0^Tf(s,p_0)\dif{s}>0$. Then, given $\varepsilon>0$, there exists 
$\lambda_*>0$ such that Equation \eqref{eq0} has no $T$-periodic solutions that lie in 
$[p_0-\varepsilon,p_0+\varepsilon]$ for $\lambda\in[0,\lambda_*]$;
\item\label{r2} Let $g''(p_0)\int_0^Tf(s,p_0)\dif{s}< 0$. Assume that $f$ has a separated variable 
form, that is there exist functions $\varphi$ and $h$ such that $f(t,x)=\varphi(t)h(x)$ 
for all $(t,x)\in\R\X\R$. Suppose also that $T$ is the minimal period of $\varphi$. 
Then, given $\varepsilon>0$, there exists $\lambda_*>0$ such that Equation \eqref{eq0} has 
at least two geometrically distinct $T$-periodic solutions that lie in 
$[p_0-\varepsilon,p_0+\varepsilon]$ for $\lambda\in[0,\lambda_*]$.
\item\label{r3} If $g''(p_0)\int_0^Tf(s,p_0)\dif{s}\neq 0$ then $\idx(g,p_0)= 0$;
\end{enumerate}
\end{proposition}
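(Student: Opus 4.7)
The whole proof rides on the parameterization of the set of starting points obtained just before the proposition: the hypothesis $\int_0^T f(s,p_0)\dif s \neq 0$ (implicit in all three cases) together with $g'(p_0)=0$ lets one apply the implicit function theorem to $\F(\lambda,p)=0$, yielding a $C^2$ function $\lambda=\lambda(p)$ defined on an open interval $V$ around $p_0$ such that $\lambda(p_0)=0$, $\lambda'(p_0)=0$ and $\lambda''(p_0)=-Tg''(p_0)\big/\int_0^Tf(s,p_0)\dif s$ as in \eqref{secderl}. My first step would be to shrink $\varepsilon>0$ so that $p_0$ is the unique zero of $g$ in $[p_0-\varepsilon,p_0+\varepsilon]$ and $V\supseteq[p_0-\varepsilon,p_0+\varepsilon]$. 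Then, using the local compactness of $\Sc$ (Remark~\ref{compl} transported through the homeomorphism $h$ of Remark~\ref{StPo}) and the fact that $T$-periodic solutions of the unperturbed equation on $\R$ are constant, a standard compactness-contradiction argument produces $\lambda_*>0$ such that every starting point $(\lambda,p)$ with $\lambda\in[0,\lambda_*]$ and $p\in[p_0-\varepsilon,p_0+\varepsilon]$ must in fact lie on the graph of $\lambda(\cdot)$.

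For part \eqref{r1}, $\lambda''(p_0)<0$ makes $p_0$ a strict local maximum of $\lambda(\cdot)$ with value zero, so after possibly further shrinking $\varepsilon$ we have $\lambda(p)\le 0$ with equality only at $p_0$. Combined with the previous step, this leaves no starting point with $\lambda\in(0,\lambda_*]$ and $p\in[p_0-\varepsilon,p_0+\varepsilon]$, hence no $T$-periodic solution contained in that interval.

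For part \eqref{r2}, $\lambda''(p_0)>0$ makes $\lambda(\cdot)$ strictly convex near $p_0$ with minimum value zero; hence for each sufficiently small $\lambda>0$ the equation $\lambda(p)=\lambda$ has exactly two solutions $p^{-}(\lambda)<p_0<p^{+}(\lambda)$ in $[p_0-\varepsilon,p_0+\varepsilon]$, giving two $T$-periodic solutions $x_{\pm}$. The substantive point is to show these are geometrically distinct: if instead $x_{+}(t+\tau)=x_{-}(t)$ for some $\tau\in(0,T)$, plugging into \eqref{eq0} and using the separated form $f(t,x)=\varphi(t)h(x)$ yields
\[
\lambda\bigl(\varphi(t+\tau)-\varphi(t)\bigr)h\bigl(x_{-}(t)\bigr)=0\qquad\text{for all }t.
\]
Because $\int_0^T f(s,p_0)\dif s=h(p_0)\int_0^T\varphi(s)\dif s$ is nonzero, one has $h(p_0)\neq 0$; since $x_{-}$ is uniformly close to $p_0$ for $\lambda$ small, $h(x_{-}(t))\neq 0$ for all $t$. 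Therefore $\varphi(t+\tau)=\varphi(t)$ identically, contradicting the minimality of $T$ as period of $\varphi$. This geometric-distinctness step is the main obstacle of the proof, and it is precisely where the separated-variable hypothesis is used.

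For part \eqref{r3}, $T$-resonance forces $g'(p_0)=0$, and $g''(p_0)\neq 0$ gives the Taylor expansion $g(p)=\tfrac12 g''(p_0)(p-p_0)^2+o\bigl((p-p_0)^2\bigr)$. Hence $g$ has constant sign on a punctured neighborhood of $p_0$, so $g$ is homotopic through nonvanishing vector fields to a constant on a small circle around $p_0$, giving $\idx(g,p_0)=0$ by the definition of the index in $\R$.
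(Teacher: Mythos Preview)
Your arguments for parts \eqref{r1} and \eqref{r2} follow the paper's proof essentially line for line: the implicit-function parametrization $p\mapsto\lambda(p)$ with $\lambda'(p_0)=0$ and the sign of $\lambda''(p_0)$ controlling concavity/convexity, followed in \eqref{r2} by the same computation using $f(t,x)=\varphi(t)h(x)$ to rule out a time-shift relation between the two solutions. Your preliminary compactness step (forcing all nearby starting points onto the graph) is a bit more explicit than the paper's, but the content is the same.

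Part \eqref{r3} is where you diverge. The paper argues by contradiction: assuming $\idx(g,p_0)\neq 0$, Lemma~\ref{Rej} makes $p_0$ ejecting, hence for every choice of forcing term there are $T$-periodic solutions near $p_0$ for small $\lambda>0$; but if $g''(p_0)\int_0^T f(s,p_0)\dif s>0$ this contradicts \eqref{r1}, and if the product is negative one replaces $f$ by $-f$ to flip the sign and again invoke \eqref{r1}. Your route is more elementary and entirely static: from $g'(p_0)=0$ and $g''(p_0)\neq 0$, Taylor's formula gives $g$ a constant sign on a punctured neighborhood of $p_0$, so the one-dimensional index vanishes directly. (The ``small circle'' language is out of place in $\R$; what you mean is simply that $\sign g(p_0+\delta)=\sign g(p_0-\delta)$, whence $\deg(g,(p_0-\delta,p_0+\delta))=0$.) Your argument is shorter and does not need Lemma~\ref{Rej} or the $f\mapsto -f$ trick; the paper's argument, on the other hand, shows how the dynamical conclusion \eqref{r1} feeds back to constrain the index via the ejecting property, which is more in the spirit of the surrounding discussion.
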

\begin{proof}
For the sake of simplicity we shall assume throughout this proof that $p_0=0$.

\smallskip 
\noindent\textbf{(\ref{r1})} The argument above shows that there exists a function 
$\lambda:p\mapsto\lambda(p)$, defined in a neighborhood of $p_0=0$, such that $\lambda(0)=0$ 
and $\big(\lambda(p),p\big)$ is a starting point that we regard as a $T$-pair according to 
diagram \eqref{diagram}. As already observed, the $T$-resonance of $0$ implies $g'(0)=0$ 
so that we have $\lambda'(0)=0$. Also, Formula \eqref{secderl} yields $\lambda''(0)<0$. 
Since $\lambda(\cdot)$ is a $C^2$ function we can write
\begin{equation}\label{flam}
 \lambda(p)=\frac{1}{2}\lambda''(0)p^2+\sigma\big(|p|\big)p^2,
\end{equation}
where $\sigma$ is a continuous function with $\sigma(0)=0$. Fix any $\bar\lambda>0$. We have that, 
for a sufficiently small $\varepsilon>0$, equation $\lambda(p)=\bar\lambda$ has no solution in the 
interval $[-\varepsilon,\varepsilon]$. In fact, if $p_{\bar\lambda}\in[-\varepsilon,\varepsilon]$ 
is such a solution, we have $p_{\bar\lambda}\neq 0$ and
\begin{equation}\label{eqcontr}
 \frac{1}{2}\lambda''(0) 
     = \frac{\bar\lambda}{p_{\bar\lambda}^2}-\sigma\big(|p_{\bar\lambda}|\big).
\end{equation}
Since $\sigma(0)=0$ the right-hand-side of  \eqref{eqcontr} is positive when 
$|p_{\bar\lambda}|<\varepsilon$ is sufficiently small, contradicting $\lambda''(0)<0$. This
proves the assertion.

\smallskip 
\noindent\textbf{(\ref{r2})} Clearly, Equation \eqref{flam} still holds but now we have $\lambda''(0)>0$. 
Since $\sigma(0)=0$, taking $\varepsilon>0$ sufficiently small we can assume that
\[
 \alpha(p):=\frac{1}{2}\lambda''(0)+\sigma\big(|p|\big)>0, \quad 
                                        \forall p\in[-\varepsilon,\varepsilon].
\]
Also, our assumption implies that $h(0)\neq 0$. Thus, restricting $\varepsilon$ if necessary, we
can assume that $h(p)\neq 0$ for all $p\in [-\varepsilon,\varepsilon]$.

We claim that the following equation:
\[
 \alpha(p)p^2=\bar\lambda
\]
admits two different solutions when $\bar\lambda$ is sufficiently small. To prove the claim put
\[
\ell=\min\left\{\max_{p\in [-\varepsilon,\varepsilon]}\{p\sqrt{\alpha(p)}\}\;,\;
                       \max_{p\in [-\varepsilon,\varepsilon]}\{-p\sqrt{\alpha(p)}\}\right\},
\]                       
which is clearly positive. The intermediate value theorem shows that the two equations
\[
 p\sqrt{\alpha(p)}=\lambda,\qquad -p\sqrt{\alpha(p)}=\lambda
\]
have both solutions (obviously different) in $[-\varepsilon,\varepsilon]$ for 
$\lambda\in[0,\sqrt{\ell}]$. Setting $\bar\lambda=\sqrt{\ell}$ we have the claim. Let us denote
these two solutions by $p_1(\lambda)$ and $p_2(\lambda)$. Clearly, these points are initial
points of $T$-periodic solutions of \eqref{eq0} which, for simplicity we denote by $\xi_\lambda$ 
and $\eta_\lambda$. That is, for all $t\in\R$, we put 
\[
\xi_\lambda(t)=x\big(\lambda,p_1(\lambda),t\big)\quad\text{and}\quad 
       \eta_\lambda(t)=x\big(\lambda,p_2(\lambda),t\big).
\]
We need to prove that, there exists $\lambda_*\in(0,\bar\lambda]$ such that, when 
$\lambda\in(0,\lambda_*]$ the points $\xi_\lambda$ and $\eta_\lambda$ are geometrically distinct.
By construction, both $p_1(\lambda)$ and $p_2(\lambda)$ tend to $p_0=0$ as $\lambda\to0$.
The continuous dependence on initial data implies that there exists $\lambda_*>0$ such that 
the images of $\xi_\lambda$ and $\eta_\lambda$ are contained in $[-\varepsilon,\varepsilon]$.
Suppose now, by contradiction, that $\xi$ and $\eta$ are not geometrically distinct. Then 
there exists $\tau\in(0,T)$ such that 
\[
\xi_\lambda(t)=x\big(\lambda,p_1(\lambda),t\big)
              =x\big(\lambda,p_2(\lambda),t+\tau\big)=\eta_\lambda(t+\tau).
\]
Differentiating this identity with respect to $t$ we get $\dot\xi_\lambda(t)=\dot\eta_\lambda(t+\tau)$ 
and, taking \eqref{eq0} into account, we get
\begin{multline*}
 g\big(\xi_\lambda(t)\big)+\lambda \varphi(t)h\big(\xi_\lambda(t)\big)=\\
           =g\big(\eta_\lambda(t+\tau)\big)+\lambda \varphi(t+\tau)h\big(\eta_\lambda(t+\tau)\big)
           =g\big(\xi_\lambda(t)\big)+\lambda \varphi(t+\tau)h\big(\xi_\lambda(t)\big).
\end{multline*}
This implies $\varphi(t)h\big(\xi\lambda(t)\big)=\varphi(t+\tau)h\big(\xi_\lambda(t)\big)$, for all 
$t$. Since $h$ is never zero on $\xi_\lambda(t)$, we have $\varphi(t)=\varphi(t+\tau)$, $t\in\R$, 
which contradicts the minimal period assumption.

\smallskip 
\noindent\textbf{(\ref{r3})} Suppose by contradiction that $\idx(g,p_0)\neq 0$. Assume first that 
$g''(p_0)$ and $\int_0^Tf(s,p_0)\dif{s}$ are nonzero and their signs agree. On one hand, proceeding as 
in (\ref{r1}) we see that there are no $T$-periodic solutions in a neighborhood of $p_0$ for $\lambda>0$ 
small. On the other hand we know that isolated zeros with nonzero index are ejecting for \eqref{eq0}. 
This yields a contradiction. Assume now that $g''(p_0)\int_0^Tf(s,p_0)\dif{s}<0$. The same argument 
applied to equation
\[
 \dot x = g(x)-\lambda f(t,x)
\]
(now $f$ is replaced by $-f$) yields a contradiction again.
\end{proof}

\begin{example}\label{ex2tang}
Take $g(x)=\frac{x^3(1+x)^2(x-1)^2}{1+x^6}$ and $f(t,x)=\sin (t)+1$, $T=2\pi$. We have three 
$T$-resonant zeros, say $p_1=0$, $p_2=1$ and $p_3=-1$,  but $\idx(g,p_1)\neq 0$. So $p_1$ is ejecting.
Also we have $\lambda''(p_1)=0$, $\lambda''(p_2)=-4$,  $\lambda''(p_3)=+4$. Thus there are 3 solutions 
for small $\lambda\geq 0$. Figure \ref{fig2tang} shows a portion of the set of starting pairs for 
\eqref{eq0} when $g$ and $f$ are selected as above.
\end{example}
\begin{figure}[h!]
\includegraphics[width=0.45\linewidth]{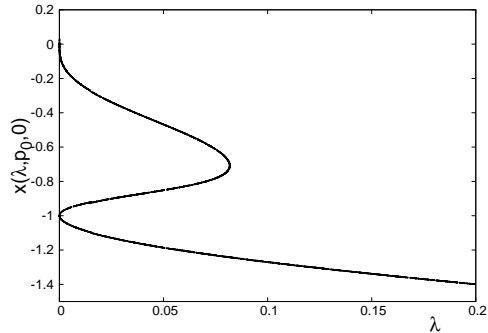}
\caption{Starting points for Example \ref{ex2tang}.}\label{fig2tang}
\end{figure}

\begin{remark}\label{remnoso}
Suppose $p_0$ is a $T$-resonant zero of $g$ and $\int_0^T f(s,p_1)\dif{s}\neq 0$. The argument of
Proposition \ref{proNTomu} shows that, if the signs of $g''(p_0)$ and $\int_0^T f(s,p_1)\dif{s}$ disagree, 
then there are two $T$-periodic solutions near $p_0$ for small $\lambda$, but none if they agree.
Figure \ref{figTNT} illustrate this phenomenon by showing a portion of the set of starting pairs for 
\eqref{eq0} when $g(x)=\frac{x^3+x^2}{1+x^2}$ and $f$ is selected in different ways.
\end{remark}
\begin{figure}[h!]
\begin{tabular}{c@{\hskip 1cm}c}
\subfigure[$f(t,x)=\sin (t)+1$ the signs agree.]{\includegraphics[width=0.45\linewidth]{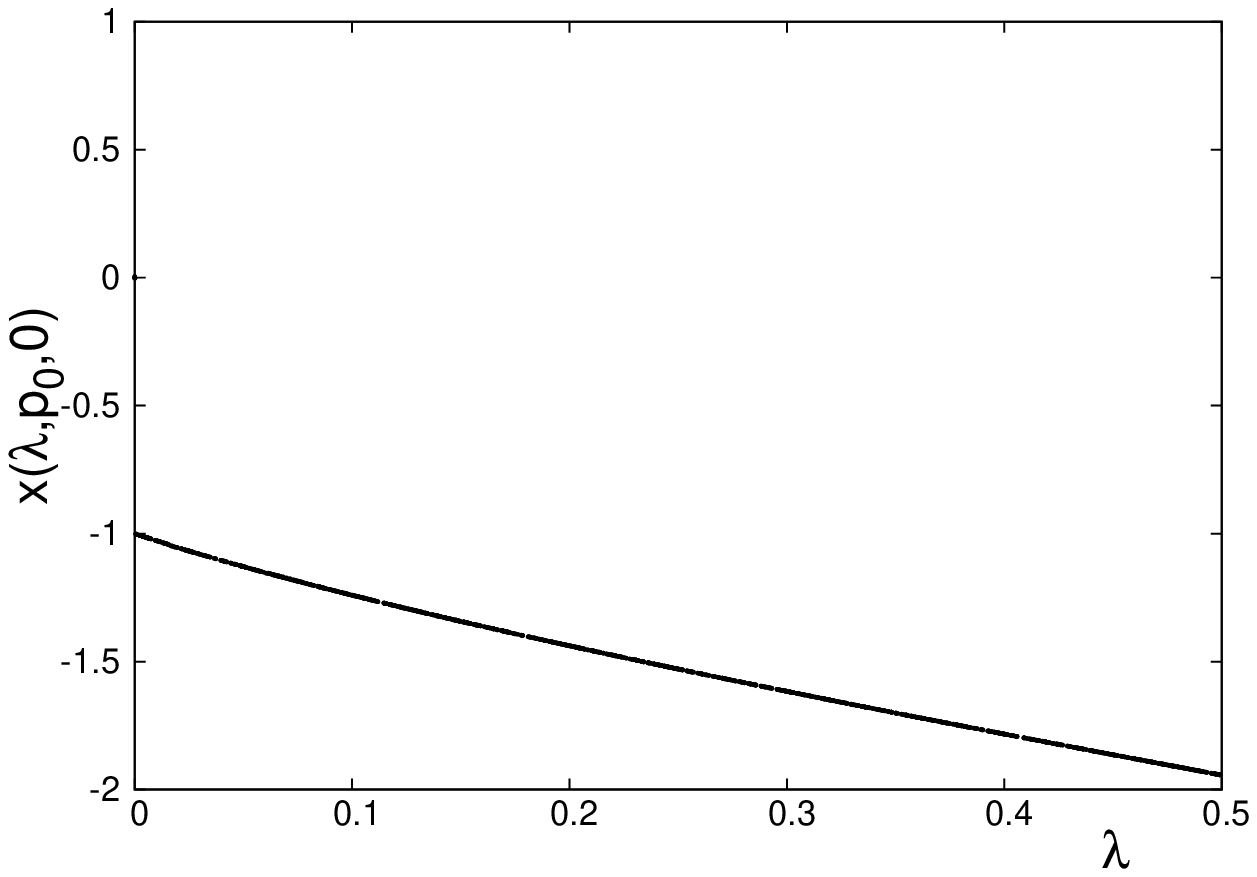}} & 
\subfigure[$f(t,x)=\sin (t)-1$ the signs disagree]{\includegraphics[width=0.45\linewidth]{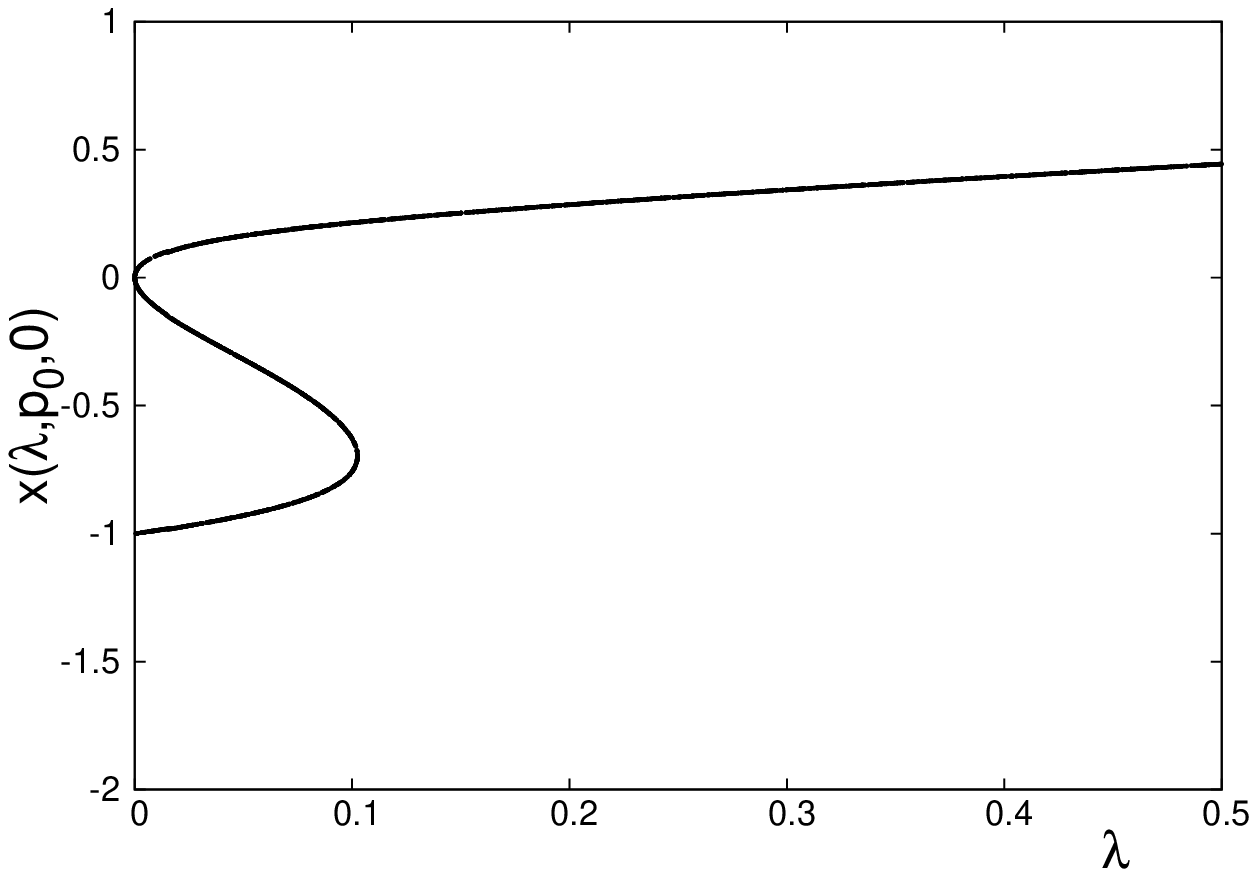}}
\end{tabular}
\caption{Two different cases for Example \ref{remnoso}.}\label{figTNT}
\end{figure}

\subsection{The case $M=\R^2$.}
When $M=\R^2$ it is no longer true that any isolated zero with nonzero index must be 
ejecting. We have, however, some structure:
\begin{lemma}\label{piattello}
Let  $M=\R^2$ and let $p_0$ be an isolated zero of $g$ that is accumulation point of 
initial points of
$T$-periodic orbits of $\dot x =g(x)$. Then, if $r$ denotes a half-line with origin in $p_0$, 
there exists a sequence $\{p_n\}_{n\in\N}\subseteq r$, with $p_n\to p_0$, such that the 
solution of $\dot x =g(x)$ starting from $p_n$ at time $t=0$ is $T$-periodic.
\end{lemma}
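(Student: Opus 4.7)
The plan is to start from a sequence of initial points of $T$-periodic orbits converging to $p_0$, show that the corresponding orbits themselves shrink to $p_0$, and then use Jordan-curve topology in the plane to intersect each orbit with the prescribed half-line $r$.

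Concretely, since $p_0$ is an accumulation point of initial points of $T$-periodic solutions of $\dot x=g(x)$, I would pick a sequence $\{q_n\}\subset\R^2\setminus\{p_0\}$ with $q_n\to p_0$ such that $x_n(\cdot):=x(0,q_n,\cdot)$ is $T$-periodic. Because $p_0$ is an isolated zero of $g$, for $n$ large $g(q_n)\neq 0$, so $x_n$ is nonconstant; by uniqueness of the Cauchy problem (recall $g$ is smooth here) the orbit $\gamma_n:=x_n([0,T])$ is then a Jordan curve. Continuous dependence on the initial data, combined with the fact that the solution starting from $p_0$ is the constant $p_0$, gives $x_n\to p_0$ uniformly on $[0,T]$, whence $\mathrm{diam}(\gamma_n)\to 0$.

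The geometric core of the argument is to show that, for $n$ large, $p_0$ lies in the bounded component $D_n$ of $\R^2\setminus\gamma_n$. For this, I would fix an open ball $B$ centered at $p_0$ so small that $B\cap g^{-1}(0)=\{p_0\}$. For $n$ large $\gamma_n\subset B$ and, since $B$ is simply connected, $D_n\subset B$ as well. The standard planar fact that every nontrivial periodic orbit of a $C^1$ vector field in $\R^2$ must enclose at least one zero of the field---a consequence of the Poincar\'e--Bendixson theory, or equivalently of the index theorem applied to the Jordan curve $\gamma_n$ traversed by the flow, whose index equals $\pm 1$---then forces the enclosed zero to be $p_0$, the only zero available in $B$.

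To conclude, the half-line $r$ issuing from $p_0$ must meet the Jordan curve $\gamma_n$, which encloses $p_0$. Picking any $p_n\in r\cap\gamma_n$, the point $p_n$ lies on the $T$-periodic orbit $\gamma_n$, so by autonomy it is the initial value of a $T$-periodic solution (a time-translate of $x_n$); moreover $p_n\to p_0$ since $p_n\in\gamma_n$ and $\mathrm{diam}(\gamma_n)\to 0$. The only non-routine step, and hence the main obstacle, is the topological one showing $p_0\in D_n$; once that is secured, the rest is a straightforward combination of continuous dependence on initial data and elementary Jordan-curve geometry.
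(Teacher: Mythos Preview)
Your proposal is correct and follows essentially the same line as the paper's proof: pick $q_n\to p_0$ with $T$-periodic orbits, use continuous dependence to shrink the orbits to $p_0$, invoke the planar fact that a nontrivial periodic orbit encloses a zero of the field (the paper cites \cite[Th.\ 2, \S 5, Ch.\ 11]{HS74} for this), conclude from the isolation of $p_0$ that the enclosed zero is $p_0$ itself, and then intersect with $r$. Your write-up in fact supplies a few details the paper leaves implicit (why $\gamma_n$ is a Jordan curve, why $D_n\subset B$), but the argument is the same.
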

\begin{proof}[Sketch of the proof.]
Let $\{q_n\}_{n\in\N}$ be a sequence of initial points of $T$-periodic orbits of $\dot x =g(x)$
such that $q_n\to p_0$. Let $x_n$, $n\in\N$, be the maximal solution of the following Cauchy 
problems:
\[
 \left\{ 
  \begin{array}{l}
   \dot x=g(x),\\
   x(0)=q_n.
  \end{array}
 \right. 
\]
Then $x_n$ converges uniformly on $[0,T]$ to the function $\hat p_0(t)\equiv p_0$. Let $\Orb_n$ 
be the orbit that contains $q_n$, that is $\Orb_n=x_n\big([0,T])$. It is well-known that there
exists $z_n$ belonging to the part of plane enclosed by $\Orb_n$ such that $g(z_n)=0$ (see e.g.\
\cite[Th.\ 2, \S 5, Ch.\ 11]{HS74}). Clearly $z_n\to p_0$ but, since by assumption $p_0$ is an 
isolated zero of $g$, the only possibility is that $z_n=p_0$ eventually. This implies that the 
orbits $\Orb_n$ enclose $p_0$ when $n$ is sufficiently large. Thus, any half line with origin 
in $p_0$ necessarily meets $\Orb_n$. Let us choose a point $p_n$ in $r\cap\Orb_n$. Clearly,
since the equation is autonomous $p_n$ is an initial point of a $T$-periodic solution (like all 
the points of $\Orb_n$) and $p_n\to p_0$. 
\end{proof}

We have the following necessary condition for an isolated zero to be accumulation point of 
initial points of $T$-periodic orbits of $\dot x =g(x)$.

\begin{theorem}\label{socond}
 Let  $M=\R^2$ and let $p_0$ be an isolated zero of $g$ that is accumulation point of 
initial points of $T$-periodic orbits of $\dot x =g(x)$. Then,
\begin{equation}\label{socoeq}
 \int_0^Te^{(T-s)g'(p_0)}g''(p_0)\left[ e^{sg'(p_0)}v,e^{sg'(p_0)}v\right]\dif{s} = 0,
\end{equation}
for all $v\in\R^2$.
\end{theorem}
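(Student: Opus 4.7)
The plan is to combine Lemma \ref{piattello} with a second-order Taylor expansion of the Poincar\'e translation operator $P_T$ at $p_0$, much in the spirit of the proof of Lemma \ref{stretto} but pushed one order further. Throughout we implicitly assume $g$ is $C^2$ near $p_0$ (otherwise the right-hand side of \eqref{socoeq} is not even defined). As in the proof of Lemma \ref{stretto}, let $\Phi(\xi):=P_T(\xi)-\xi$, so that fixed points of $P_T$ are precisely the zeros of $\Phi$. Fix a nonzero $v\in\R^2$ and let $r$ be the half-line from $p_0$ in the direction of $v$. By Lemma \ref{piattello} there exists a sequence $p_n=p_0+t_n v\in r$, with $t_n\to 0^+$, such that $\Phi(p_n)=0$ for every $n$. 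Since also $\Phi(p_0)=0$, the same difference-quotient argument used in Lemma \ref{stretto} yields
\begin{equation*}
 0=\lim_{n\to\infty}\frac{\Phi(p_n)-\Phi(p_0)}{t_n}=\Phi'(p_0)v=\bigl(e^{Tg'(p_0)}-I\bigr)v.
\end{equation*}
Because $v$ can be chosen in two linearly independent directions, $e^{Tg'(p_0)}=I$, so $p_0$ is $T$\hbox{-}resonant and $\Phi'(p_0)=0$.

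With the first derivative eliminated, a second-order Taylor expansion of the $C^2$ map $\Phi$ along $p_0+t_n v$ gives
\begin{equation*}
 0=\Phi(p_n)=\tfrac{1}{2}t_n^2\,\Phi''(p_0)[v,v]+o(t_n^2).
\end{equation*}
Dividing by $t_n^2$ and letting $n\to\infty$ yields $\Phi''(p_0)[v,v]=0$. Since the direction $v$ is arbitrary and the quadratic form $v\mapsto\Phi''(p_0)[v,v]$ is homogeneous of degree two, this vanishing identity holds for all $v\in\R^2$.

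It remains to identify $\Phi''(p_0)[v,v]=P_T''(p_0)[v,v]$ with the integral in \eqref{socoeq}. Differentiating twice the integral identity $P_t(p)=p+\int_0^t g(P_s(p))\dif{s}$ with respect to $p$ in the direction $v$ and evaluating at $p=p_0$, using $g(p_0)=0$ so that $P_s(p_0)\equiv p_0$, we see that $\alpha(t):=P_t'(p_0)v$ satisfies \eqref{CPalpha} (hence $\alpha(t)=e^{tg'(p_0)}v$) and $\beta(t):=P_t''(p_0)[v,v]$ solves the Cauchy problem
\begin{equation*}
 \dot\beta(t)=g'(p_0)\beta(t)+g''(p_0)\bigl[\alpha(t),\alpha(t)\bigr],\qquad \beta(0)=0.
\end{equation*}
The variation-of-constants formula then gives
\begin{equation*}
 \beta(T)=\int_0^T e^{(T-s)g'(p_0)}\,g''(p_0)\bigl[e^{sg'(p_0)}v,e^{sg'(p_0)}v\bigr]\dif{s},
\end{equation*}
and this equals $0$ by the previous paragraph, which is exactly \eqref{socoeq}.

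The conceptual part is short: the combinatorial input is Lemma \ref{piattello}, which supplies the sequences along arbitrary half-lines needed to knock out first and second derivatives. The main technical point, which I expect to be the chief bookkeeping obstacle, is the careful derivation of the second variational equation for $\beta(t)$ and the verification that all the formal differentiations under the integral sign are legitimate under the $C^2$ hypothesis on $g$; everything else is a direct application of Taylor's theorem and the arbitrariness of $v$.
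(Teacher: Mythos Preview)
Your proof is correct and follows essentially the same route as the paper: the paper first isolates the vanishing of $P_T''(p_0)[v,v]$ in a separate lemma (using Lemma \ref{piattello} to get sequences along arbitrary half-lines, showing $P_T'(p_0)=\id$ and then that the second-order remainder vanishes), and then identifies $P_T''(p_0)[v,v]$ with the integral via the same second variational equation and variation-of-constants formula that you derive. Your packaging via $\Phi=P_T-\id$ and Taylor's theorem is a cosmetic variant of the same argument.
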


The proof is based on the following technical result:
\begin{lemma}\label{lemsoco}
 Let  $M=\R^2$ and let $p_0$ be an isolated zero of $g$ that is accumulation point of 
initial points of $T$-periodic orbits of $\dot x =g(x)$. Then,
\[
 \D_{22}x(0,p_0,T)[v,v] = 0,
\]
for all $v\in\R^2$.
\end{lemma}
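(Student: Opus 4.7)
The strategy is to use Lemma \ref{piattello} to produce, for each direction $v \in \R^2$, a radial sequence of initial points of $T$-periodic orbits of $\dot x = g(x)$ accumulating at $p_0$, and then to extract the required second-order information via a Taylor expansion of the Poincar\'e operator in a single real variable along that direction.

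Fix a nonzero $v \in \R^2$ (the case $v = 0$ being trivial) and let $r$ be the half-line originating at $p_0$ with direction $v$. Lemma \ref{piattello} yields a sequence $t_n \downarrow 0$ such that $p_n := p_0 + t_n v \in r$ is the initial point of a $T$-periodic solution of $\dot x = g(x)$, i.e.\ $x(0, p_n, T) = p_n$. I would then introduce the $\R^2$-valued $C^2$ map
\[
\Psi(t) := x(0, p_0 + t v, T) - (p_0 + t v),
\]
which vanishes at $0$ (because $g(p_0) = 0$ implies $x(0, p_0, T) = p_0$) and at each $t_n$.

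Next I would compute $\Psi'(0) = \big(e^{T g'(p_0)} - I\big) v$ and $\Psi''(0) = \D_{22}x(0, p_0, T)[v, v]$, the first identity following exactly as in the derivation of \eqref{Dp}. The second-order Taylor expansion
\[
0 = \Psi(t_n) = t_n \Psi'(0) + \tfrac{t_n^2}{2}\Psi''(0) + o(t_n^2)
\]
gives, upon dividing by $t_n$ and letting $n \to \infty$, that $\Psi'(0) = 0$. Since $v$ was arbitrary, this already forces $e^{T g'(p_0)} = I$, so the hypothesis implicitly makes $p_0$ a $T$-resonant zero (a useful byproduct that justifies the form of \eqref{socoeq} in Theorem \ref{socond}). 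With $\Psi'(0) = 0$ the relation reduces to $0 = \tfrac{t_n^2}{2}\Psi''(0) + o(t_n^2)$; dividing by $t_n^2$ and taking the limit yields $\Psi''(0) = 0$, which is the claim. The only step requiring some care --- though not a genuine obstacle --- is justifying that the remainder is truly $o(t_n^2)$; this uses the $C^2$ regularity of $g$, implicit in the appearance of $g''(p_0)$ in the subsequent Theorem \ref{socond}, and passes to $\Psi$ since $t \mapsto \Psi(t)$ is a $C^2$ function of a single real variable.
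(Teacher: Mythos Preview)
Your proof is correct and follows essentially the same route as the paper's: invoke Lemma \ref{piattello} to obtain, for each direction, a radial sequence of $T$-periodic initial points accumulating at $p_0$, and then read off the vanishing of the first- and second-order directional derivatives of the Poincar\'e displacement from a Taylor expansion along that ray. The paper organizes the computation slightly differently---it first establishes $\D_2 x(0,p_0,T)=I$ as a linear map (using all unit directions) via a first-order difference quotient, and then evaluates a second-order difference quotient---whereas you package both steps into the single auxiliary function $\Psi$ and extract $\Psi'(0)=0$ and $\Psi''(0)=0$ successively from the same sequence; but the substance is identical.
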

\begin{proof}
 Let $w$ be any unit vector in $\R^2$ and consider the half-line $r_w$ generated by $w$ 
 with origin in $(0,0)$, namely the set $r_w=\{\lambda w,\lambda\geq 0\}$. Let 
 $\{q_n\}_{n\in\N}\subseteq r_w$ be a sequence as in Lemma \ref{piattello}. Then,
 \[
  \D_2x(0,p_0,T)w = \lim_{n\to\infty}\frac{x(0,q_n,T)-x(0,p_0,T)}{|q_n-p_0|}
                  =\lim_{n\to\infty}\frac{q_n-p_0}{|q_n-p_0|}=w.
 \]
 The linearity of $\D_2x(0,p_0,T)$ and the arbitrariness of $w$ show that $\D_2x(0,p_0,T)$ 
 is, in fact, the identity.
 Take now any vector $v\in\R^2$ and consider the half-line $r_v$ generated by  $v$ with 
 origin in $(0,0)$. Let $\{p_n\}_{n\in\N}\subseteq r_v$ be a sequence as in Lemma
 \ref{piattello}. Then we get
 \begin{align*}
  \D_{22}x(0,p_0,T)[v,v] 
     =& \lim_{n\to\infty}\frac{x(0,p_n,T)-x(0,p_0,T)
              -\D_2x(0,p_0,T)(p_n-p_0)}{\frac{1}{2}|p_n-p_0|^2}\\
     =& \lim_{n\to\infty}\frac{p_n-p_0-(p_n-p_0)}{\frac{1}{2}|p_n-p_0|^2}=0,
 \end{align*}
 as desired.
\end{proof}

\begin{proof}[Proof of Theorem \ref{socond}]
 We have, for $t\in [0,T]$,
 \begin{multline*}
 \D_{22}x(0,p_0,t)v= \\ =\lim_{h\to 0^+} \frac{1}{h^2}\int_0^t
      \Big[g\big(x(0,p_0+hv,s)\big)-2g\big(x(0,p_0,s)\big)+g\big(x(0,p_0-hv,s)\big)\Big]\dif{s}.
\end{multline*}
Proceeding as in \eqref{fd22} we get 
\begin{multline*}
  \D_{22}x(0,p_0,T)[v,v]=\\
              =  \int_0^t \Big[g'(p_0)\D_{22}x(0,p_0,t)[v,v]
                     +g''(p_0)[\D_{2}x(0,p_0,s)v,x(0,p_0,s)v]\Big]\dif{s},
\end{multline*}
Setting $\alpha(t)=\D_{2}x(0,p_0,t)v$ and $\beta=\D_{22}x(0,p_0,t)[v,v]$, we have that $\beta$
solves the following Cauchy problem:
\[
 \left\{ 
  \begin{array}{l}
   \dot\beta(t)=g'(p_0)\beta(t)+g''(p_0)[\alpha(t),\alpha(t)],\\
   \beta(0)=0.
  \end{array}
 \right. 
\]
As we know, $\alpha(t)=e^{tg'(p_0)}v$. Thus
\[
 \beta(t)=\int_0^Te^{(T-s)g'(p_0)}g''(p_0)\left[ e^{sg'(p_0)}v,e^{sg'(p_0)}v\right]\dif{s}.
\]
The assertion follows from Lemma \ref{lemsoco}.
\end{proof}

\begin{remark}
 Combining theorems \ref{tuno} and \ref{socond} as in the argument of Lemma \ref{stretto} we 
 see that if $p_0$ is an isolated zero of $g$ with the property that
 \begin{equation}\label{sonTres}
 \exists v\in\R^2 :
 \int_0^Te^{(T-s)g'(p_0)}g''(p_0)\left[ e^{sg'(p_0)}v,e^{sg'(p_0)}v\right]\dif{s}\neq 0,
\end{equation}
then $p_0$ must be ejecting.
\end{remark}

Condition \eqref{sonTres} is difficult to use as it is, but it gets a considerably simpler form
when $g'(p_0)$ is singular. In fact, one could prove that if $g'(p_0)$ is \emph{not} singular, 
then \eqref{sonTres} is never satisfied, in the sense that the integral in \eqref{sonTres} is
zero (the proof is based on the fact that when $g'(p_0)$ is not singular, but $p_0$ is $T$-resonant, 
$e^{tg'(p_0)}$ is necessarily a $t$-rotation matrix, so that the integrand has a particular form). 
Thus, the only interesting cases happen when $g'(p_0)$ is singular.

Let $M$, $p_0$ and $g$ be as in Theorem \ref{socond}. Suppose that $\Ker g'(p_0)\neq\{0\}$. If 
$v\in\Ker g'(p_0)$ then $e^{tg'(p_0)}v=v$, Thus, for such a $v$, equation \eqref{socoeq} reads
\[
 \int_0^Te^{(T-s)g'(p_0)}g''(p_0)[v,v]\dif{s} = 0.
\]
and condition \eqref{sonTres} becomes
\begin{equation}\label{RsonT1}
 \exists v\in\Ker g'(p_0)  :
 \int_0^Te^{(T-s)g'(p_0)}g''(p_0)[v,v]\dif{s}\neq 0.
\end{equation}
The situation is even simpler if $\Ker g'(p_0) = \R^2$. In fact, in this case $e^{tg'(p_0)}$ is
the identity for any $t$. Thus condition \eqref{sonTres} reduces to
\begin{equation}\label{RsonT2}
 \exists v\in\R^2  : g''(p_0)[v,v]\neq 0.
\end{equation}

We summarize the last discussion in the following
\begin{theorem}\label{condisofin}
Let  $M=\R^2$ and let $p_0$ be an isolated zero of $g$ with nonzero index. Then, $p_0$ is ejecting
for the set of $T$-pairs if there exists $v\in\Ker g'(p_0)$ such that
\[ 
\int_0^Te^{(T-s)g'(p_0)}g''(p_0)[v,v]\dif{s}\neq 0.
\]
If, in particular, $\Ker g'(p_0) = \R^2$ it is sufficient to find $v\in\R^2$ such that 
$g''(p_0)[v,v]\neq 0$.
\end{theorem}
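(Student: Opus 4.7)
The plan is to follow the template used by the authors in Remark \ref{remEj} (and flagged in the remark that precedes the theorem): combine Theorem \ref{tuno}, Theorem \ref{socond}, and Lemma \ref{conn1} to upgrade the hypothesis into the statement that $\hat p_0$ is an isolated, ejecting point of $\xx_0$. The one piece of real work is to verify that the assumption of Theorem \ref{condisofin} implies, via Theorem \ref{socond}, that $\hat p_0$ is isolated in the slice $\xx_0$; the rest is a careful assembly of the pieces already available.

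First, I rewrite the hypothesis in the form of condition \eqref{sonTres}. For $v\in\Ker g'(p_0)$ one has $g'(p_0)v=0$, so $e^{sg'(p_0)}v=v$ for every $s$, and the integrand in \eqref{socoeq} collapses to $e^{(T-s)g'(p_0)}g''(p_0)[v,v]$. Hence the hypothesis supplies exactly a vector $v\in\R^2$ satisfying \eqref{sonTres}. In the degenerate case $\Ker g'(p_0)=\R^2$ one has $g'(p_0)=0$, so $e^{tg'(p_0)}=\id$ for all $t$, and the integral reduces to $T\,g''(p_0)[v,v]$; thus the condition further simplifies to $g''(p_0)[v,v]\neq 0$, as claimed in the second part of the statement.

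Next, I show that $\hat p_0$ is isolated in $\xx_0$. Elements of $\xx_0$ are pairs $(0,x)$ with $x\in C_T(\R^2)$ a $T$-periodic solution of $\dot x=g(x)$, so $x$ is either constant at a zero of $g$ or parametrizes a non-constant $T$-periodic orbit of the autonomous equation. The former cannot accumulate at $\hat p_0$ because $p_0$ is isolated in $g^{-1}(0)$. The latter cannot accumulate either: if a sequence of non-constant $T$-periodic orbits $x_n$ converged to $\hat p_0$ in $C_T(\R^2)$, evaluating at $t=0$ would give $x_n(0)\to p_0$, making $p_0$ an accumulation point of initial points of $T$-periodic orbits of $\dot x=g(x)$; by the contrapositive of Theorem \ref{socond}, combined with the previous step, this is impossible.

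Finally, ejection follows from the same argument as in Remark \ref{remEj}. Choose a $C_T$-neighborhood $W$ of $\hat p_0$ so small that $\xx_0\cap\cl{W}=\{\hat p_0\}$ and $W\cap\R^2$ is an isolating ball around $p_0$; then $\deg(g,W\cap\R^2)=\idx(g,p_0)\neq 0$ by the index hypothesis. Lemma \ref{conn1} produces a (possibly smaller) open neighborhood $V\subseteq W$ of $\hat p_0$ and a $\delta>0$ with $\xx\cap\bigl([0,\delta]\X\Fr{V}\bigr)=\emptyset$. Applying Theorem \ref{tuno} with $\Omega=[0,\infty)\X V$ yields a connected set $\Gamma$ of nontrivial $T$-pairs in $\Omega$ whose closure meets $\hat p_0$ and is not contained in any compact subset of $\Omega$; since $\xx_0\cap V=\{\hat p_0\}$ and $\Gamma$ consists of nontrivial pairs, $\Gamma\subseteq(0,\infty)\X V$, and hence $\Gamma\cup\{\hat p_0\}$ is a connected subset of $\xx$ meeting $\{\hat p_0\}$ and not contained in $\xx_0$, proving that $\hat p_0$ is ejecting. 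The main obstacle is the isolation step, where one must separately exclude accumulation by constant and by genuinely periodic solutions and in particular notice that $C_T$-convergence of orbits forces convergence of their initial points, so that the contrapositive of Theorem \ref{socond} applies.
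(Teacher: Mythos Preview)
Your proof is correct and follows essentially the same route as the paper: the theorem is stated there as a summary of the preceding discussion, which reduces the hypothesis to condition \eqref{sonTres}, invokes the contrapositive of Theorem \ref{socond} to isolate $\hat p_0$ in $\xx_0$, and then combines Theorem \ref{tuno} with Lemma \ref{conn1} exactly as in the argument of Lemma \ref{stretto} and Remark \ref{remEj}. You are simply more explicit than the paper in separating the constant and non-constant $T$-periodic solutions in the isolation step, which is a welcome clarification rather than a different approach.
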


\begin{example}\label{ex3d}
Let $g(x,y)=(x^3 , y+x^2)$ and $T=2\pi$. The only zero of $g$ is the origin $(0,0)$. We have
\[
 g'(0,0) =\begin{pmatrix}
           0 & 0\\
           0 & 1
          \end{pmatrix},
\]
so that $(0,0)$ is $T$-resonant but $\idx\big(g,(0,0)\big)=1$. Let us check if it is ejecting. 
Since $\det g'(0,0)=0$, we try to use condition \eqref{RsonT1}. A quick computation shows that
\[
 e^{-sg'(0,0)}=\begin{pmatrix}
           1 & 0\\
           0 & e^{-s}
          \end{pmatrix},
\qquad g''(0,0)\left[\begin{pmatrix}v_1\\ v_2\end{pmatrix},
                   \begin{pmatrix}v_1\\ v_2\end{pmatrix}\right]=
\begin{pmatrix}0\\ 2v_2^2\end{pmatrix}
\]
Taking $v=\left(\begin{smallmatrix}0\\ 1\end{smallmatrix}\right)\in\Ker g'(0,0)$, we get
\[
 \int\limits_0^T e^{-sg'(0,0)}g''(0,0)[v,v]\dif{s}=\begin{pmatrix}0 \\ 2(1-e^{2\pi})\end{pmatrix}.
\]
Thus, by Theorem \ref{condisofin}, $(0,0)$ is an ejecting zero of $g$. Figure \ref{fig3d} below
shows some points sampled numerically from the set of starting points of \eqref{eq0} when
$g$ is as above and $f(t,x,y)=\sin t +1$ ($\lambda$ is on the vertical axis).
\end{example}
\begin{figure}[h!]
\includegraphics[width=0.34\linewidth,angle=-90]{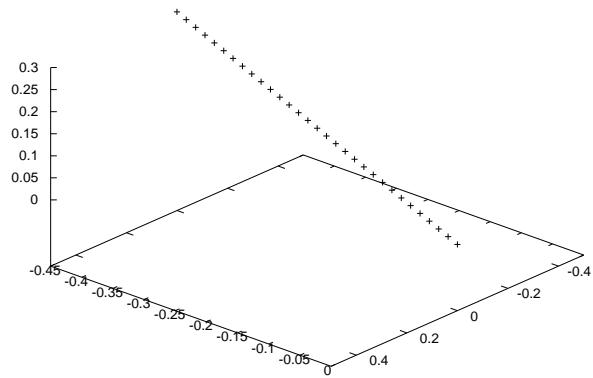}
\caption{The set of starting points of Example \ref{ex3d}.}\label{fig3d}
\end{figure}

\begin{corollary}\label{somult}
 Let $g\colon M\to\R^k$ and $f\colon\R\X M\to\R^k$ be tangent to the closed 
boundaryless submanifold $M$ of $\R^k$. Assume that $g$ is $C^2$, $g^{-1}(0)$ is 
compact and $f$ is $T$\hbox{-}periodic in $t$. Assume also that
\begin{enumerate}\renewcommand{\theenumi}{\roman{enumi}}
\item There are $n-1$ isolated zeros of $g$, $p_1,\ldots,p_{n-1}$ for which
the conditions of Theorem \ref{condisofin} hold and such that
\begin{equation*}
\sum_{i=1}^{n-1}\idx(g,p_i) \neq \deg(g,M) ;
\end{equation*}
\item The unperturbed equation
\begin{equation*}
\dot x =g(x)
\end{equation*}
does not admit unbounded connected sets of $T$\hbox{-}periodic solutions in $C_T(M)$.
\end{enumerate}
Then, for $\lambda >0$ sufficiently small, Equation \eqref{eq0} admits at least 
$n$ geometrically distinct $T$\hbox{-}periodic solutions.
\end{corollary}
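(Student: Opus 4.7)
The plan is to reproduce the architecture of the proof of Theorem \ref{genmu}, substituting the role of Lemma \ref{stretto} (which extracted ejectivity from non-$T$-resonance) with the more refined criterion of Theorem \ref{condisofin}.

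First, I would isolate the given zeros: choose pairwise disjoint open neighborhoods $V_1,\ldots,V_{n-1}$ of $p_1,\ldots,p_{n-1}$ with $\overline{V_i}\cap g^{-1}(0)=\{p_i\}$, and set $V_0 := M\setminus\bigcup_{i=1}^{n-1}\overline{V_i}$. Excision gives $\deg(g,V_i)=\idx(g,p_i)$ and additivity, together with hypothesis~(i), yields
\[
\deg(g,V_0)=\deg(g,M)-\sum_{i=1}^{n-1}\idx(g,p_i)\neq 0.
\]
Next, I would apply Theorem \ref{tuno} with $\Omega=[0,\infty)\times C_T(V_0)$ to produce a connected set $\Gamma\subseteq\Omega$ of nontrivial $T$-pairs whose closure in $[0,\infty)\times C_T(M)$ meets $g^{-1}(0)\cap V_0$ and escapes every compact subset of $\Omega$. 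Hypothesis~(ii) rules out $\Gamma$ being confined to the slice $\{0\}\times C_T(M)$, so $g^{-1}(0)\cap V_0$ is an ejecting subset of $\xx_0$, exactly as in the proof of Theorem \ref{genmu}. In parallel, Theorem \ref{condisofin} certifies that each singleton $\{p_i\}$ is an ejecting set (automatically compact). This furnishes $n$ pairwise disjoint ejecting subsets of $\xx_0$, with $n-1$ of them compact, and Theorem \ref{conn2} then supplies $\lambda_*>0$ such that $\#\xx_\lambda\geq n$ for every $\lambda\in[0,\lambda_*)$.

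For geometric distinctness, each $\{p_i\}$, being compact and relatively open in $\xx_0$ (a property guaranteed by Theorem \ref{condisofin} through the ejecting definition), fits the hypotheses of Lemma \ref{conn1}: after shrinking $V_i$ if needed I can find some $\delta_i>0$ such that every $T$-pair $(\lambda,x)$ issuing from $\{p_i\}$ with $\lambda\in[0,\delta_i]$ has $x$ inside a prescribed small neighborhood of $\hat p_i$ in $C_T(M)$, hence the image of $x$ lies in $V_i$; an analogous control holds for the solutions coming from the ejecting set in $V_0$, whose images stay in $V_0$. Since $V_0,V_1,\ldots,V_{n-1}$ are pairwise disjoint, for $\lambda>0$ small enough no two of the $n$ $T$-periodic solutions can share an image, so they are geometrically distinct.

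The chief obstacle is this final localization: in Theorem \ref{genmu} it was dispatched swiftly by Lemma \ref{stretto}, which relies on the invertibility of $e^{Tg'(p_0)}-I$ produced by non-$T$-resonance, and that tool is unavailable here. We must instead lean on the purely topological Lemma \ref{conn1}, which suffices only because Theorem \ref{condisofin} already encodes the isolation of $p_i$ in $\xx_0$ inside the ejecting property. Care is needed to coordinate the neighborhoods produced by Lemma \ref{conn1} with the geometric decomposition $M=V_0\cup\overline{V_1}\cup\cdots\cup\overline{V_{n-1}}$ so that the localization is simultaneously valid for a single $\lambda_*>0$.
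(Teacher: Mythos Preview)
The paper gives no explicit proof of Corollary~\ref{somult}; it is stated as an immediate consequence of Theorem~\ref{condisofin}, and the intended argument is precisely the one you supply: rerun the proof of Theorem~\ref{genmu} verbatim, replacing the appeal to non-$T$-resonance (via Lemma~\ref{stretto}) by Theorem~\ref{condisofin}, which furnishes both the ejecting property and the isolation of each $p_i$ in $\xx_0$. Your treatment of geometric distinctness through Lemma~\ref{conn1} is in fact exactly what underlies the last line of the proof of Theorem~\ref{genmu} (Lemma~\ref{stretto} itself concludes by invoking Lemma~\ref{conn1}), so nothing new is happening there either.
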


\begin{remark}
For the purpose of analyzing the notion of non-$T$-resonance we might have as well used an open 
set of $\R$ and $\R^2$. We have not done so, partly, for the sake of simplicity but, more 
importantly, because multiplicity results like the ones discussed in Section \ref{secMul} require 
manifolds which are closed subsets of $\R^k$.
\end{remark}


\end{document}